\begin{document}

\thispagestyle{empty}
\begin{center}
\begin{large}\textbf{DOLBEAULT AND $J$-INVARIANT COHOMOLOGIES ON ALMOST COMPLEX MANIFOLDS}\end{large} \\ \hspace{0.2cm} \\
\begin{small}
\textsc{Lorenzo Sillari and Adriano Tomassini}
\end{small}
\begin{abstract} 
\noindent \textsc{Abstract}. In this paper we relate the cohomology of $J$-invariant forms to the Dolbeault cohomology of an almost complex manifold. We find necessary and sufficient condition for the inclusion of the former into the latter to be true up to isomorphism. We also extend some results obtained by J. Cirici and S.O. Wilson about the computation of the left-invariant cohomology of nilmanifolds to the setting of solvmanifolds. Several examples are given. 
\end{abstract}
\end{center}

\blfootnote{\hspace{-0.55cm} 
2010 \textit{Mathematics Subject Classification}. 32Q60, 53C15. \\ 
\textit{Keywords} Almost complex manifold; cohomology of Lie Algebra; compact four-manifold; Dolbeault cohomology; Fr\"olicher spectral sequence; solvmanifold.\\[5pt]
Partially supported by the
Project PRIN 2017 “Real and Complex Manifolds: Topology, Geometry and holomorphic dynamics” (code 2017JZ2SW5) and by
GNSAGA of INdAM.}
\vspace*{1.5em}

\section{Introduction}

Let $(M,J)$ be a $2m$-dimensional almost complex manifold. Then the almost complex structure $J$ induces a bigrading on the bundle of differential forms on $M$. The exterior derivative $d$ acts on differential forms as the sum of four differential operators, $d= \mu + \partial + \bar \partial + \bar \mu$. 
\newline
The celebrated theorem of Newlander and Nirenberg states that $M$ admits the structure of complex manifold, i.e., $J$ is integrable, if and only if $N_J=0$, that is equivalent to $\mu = \bar \mu=0$. Consequently, in such a case $d = \partial + \bar \partial$. For complex manifolds it is classical and well established the theory of Dolbeault cohomology, obtained as the cohomology of the $\bar \partial$ operator. Another fundamental tool is the Hodge Theory for the $\bar \partial$ operator that, once fixed a Hermitian metric, establishes an isomorphism between the Dolbeault cohomology and the kernel of the Dolbeault Laplacian $\Delta_{\bar \partial}$. However for almost complex manifolds, the operator $\bar \partial$ is not cohomological and the Dolbeault cohomology cannot have the usual definition. It is natural to look for other cohomological theories to study geometric properties of almost complex manifolds.
Motivated by the comparison between the $J$-{\em tamed symplectic cone} $
\mathcal{K}^t_J$ and the $J$-{\em compatible symplectic cone} $
\mathcal{K}^c_J$ of an almost complex manifold, defined as the projection in cohomology of the space of symplectic forms taming $J$, respectively calibrating $J$, 
Li and Zhang introduced in \cite{li:zhang} the $J$-{\em invariant cohomology}, respectively $J$-{\em anti-invariant cohomology} groups of an almost complex manifold $(M,J)$, denoted with $H^+$, respectively $H^-$, formed by $2^{nd}$-de Rham classes represented by closed $J$-invariant, respectively $J$-anti-invariant forms, with respect to the natural action of $J$ on the space of $2$-forms.

Such groups generalize the real Dolbeault cohomology classes in $H^{1,1}_{\bar \partial} \cap H^2_{dR}(\R)$ and $H^{2,0}_{\bar \partial} + H^{0,2}_{\bar \partial} \cap H^2_{dR}(\R)$ respectively. The focus is on whether the almost complex structure $J$ is $C^\infty$-{\em pure}, i.e., $H^+ \cap H^- = \{ 0 \}$ or $C^\infty$-{\em full}, i.e., $H^2_{dR} = H^+ + H^- $. The problem is further studied in \cite{DLZ:4manifolds}, where it is proved that any almost complex structure on a compact $4$-manifold is $C^\infty$-pure and $C^\infty$-full, and in \cite{DLZ:antiinvariant}. Such a result can be viewed as a sort of Hodge decomposition for $4$-dimensional compact almost complex manifolds.
\vspace{.2cm}
\newline

Recently J. Cirici and S. O. Wilson defined in \cite{CW:dolbeaultcohomology} an analogous of Dolbeault cohomology for almost complex manifolds, that is also called Dolbeault cohomology. This idea of cohomology is based on the decomposition of $d$ and allows a development of a harmonic theory, at least in some favorable situation such as in \cite{CW:kahler} for the almost K\"ahler case (see also \cite{TT20}). A Fr\"olicher spectral sequence $E^{p,q}_r$ builds a bridge between the Dolbeault cohomology and the complex de Rham cohomology. In general, the computation of such groups is difficult, since they might not be finite-dimensional. A special setting in which calculations can be performed is that of Lie Algebra. Such computations have a direct application in the study of the left-invariant Dolbeault cohomology of nilmanifolds, as showed in \cite{CW:dolbeaultcohomology}. 
\vspace{.2cm}
\newline
In this paper we study the relation between the complex cohomology group $H^+_\C$ of $J$-invariant complex forms and the Dolbeault cohomology group $H^{1,1}_{Dol}$ on almost complex manifolds. Next we extend some results obtained in \cite{CW:dolbeaultcohomology} for nilmanifolds, to the case of solvmanifolds. More in details, since we have a characterization of $J$-invariant $2$-forms as real forms of complex bidegree $(1,1)$, it is natural to ask whether they belong or not to the Dolbeault cohomology groups, or at least if there exists an isomorphism between $H^+$ and a subgroup of $H^{1,1}_{Dol}$. We relate $J$-invariant cohomology and Dolbeault cohomology, finding that the condition
\[
E^{0,1}_1 \cong E^{0,1}_2
\]
is necessary and sufficient for the former cohomology group to be contained into the latter up to isomorphism (Theorem \ref{theo:inclusion:iso}). Then given any solvmanifold endowed with a left-invariant almost complex structure, we prove that the left-invariant spectral sequence satisfies Serre duality at every stage and that the left-invariant Dolbeault cohomology groups are isomorphic to the kernel of a suitable Laplacian (Theorem \ref{main:solvmanifold}). Finally calculations of left-invariant spectral sequence and $J$-invariant cohomology are performed on almost complex manifolds and solvmanifolds endowed with a left-invariant almost complex structure to give concrete applications.

The paper is organized as follows. In section $2$ we briefly recall some basic definitions that will be used later on, and we introduce the notation. In section 3 we resume the definition given by Cirici and Wilson of Dolbeault cohomology for almost complex manifolds. In particular, we focus on the spectral sequence arising from a Hodge filtration, and give an explicit description of it. Section 4 is devoted to the study of $J$-invariant cohomology and Dolbeault cohomology. We prove the results mentioned above, and investigate the behaviour of the necessary and sufficient condition under small deformations, proving with an example that it is not a closed property. Section 5 recalls the construction of the Dolbeault cohomology of Lie Algebras, while in section 6 we prove the Serre duality for solvmanifolds. Finally in section 7 we collect various examples of Dolbeault cohomology and spectral sequence. Among them, we provide computations of the left-invariant spectral sequence on $4$-dimensional solvmanifolds that do not admit any integrable almost complex structure. For such examples the Dolbeault cohomology theory for almost complex manifolds becomes the main tool to investigate their geometry.
\medskip

\textit{Acknowledgements.} The authors would like to thank Joana Cirici and Weiyi Zhang for useful comments and remarks.

\section{Preliminaries and notation}

Let $(M, J)$ be an almost complex manifold of real dimension $2m$, with $J$ an almost complex structure on the tangent bundle, i.e., $ J \in End (TM)$ such that $J^2 = - Id$. Denote by $A^*_\R$ (respectively $A^*_\C$)  the algebras of real (respectively complex) differential forms on $M$. $J$ induces a bigrading on complex forms,
\begin{equation}
A^k_\C = \bigoplus_{p+q=k} A^{p,q}_\C.
\end{equation}
On real $k$-forms, $\alpha \in A^k_\C$, $J$ induces a map still denoted by $J$ and defined as
\begin{equation}
    J \alpha (X_1, \dots , X_k) = \alpha (JX_1, \dots, JX_k).
\end{equation}
If $k$ is odd, $J^2=-Id$, while if $k$ is even, $J$ is an involution. In particular, $A^2_\R$ decomposes as 
\begin{equation}
    A^2_\R = A^+_\R + A^-_\R,
\end{equation}
where $A^+_\R$ denotes the $J$-invariant forms and $A^-_\R$ the $J$-anti-invariant forms. If we consider the bidegree induced on complex forms by $J$, it's easy to check that $A^+_\R$ consists of real forms in $A^{1,1}_\C$, while $A^-_\R$ of real forms in $A^{2,0}_\C + A^{0,2}_\C$. We denote with $H^{*}_{dR}(\R)$ (respectively $H^{*}_{dR}(\C)$) the real (respectively complex) de Rham cohomology of $M$. 
\newline 
The de Rham cohomology groups consisting of $J$-invariant and $J$-anti-invariant forms were introduced in \cite{li:zhang}. We shall use the notation of \cite{DLZ:antiinvariant}. The \emph{$J$-invariant} real cohomology group is
\begin{equation}
 H^+ = \Big \{ [\alpha] \in H^2_{dR} (\R) : \alpha \in A^+_\R \cap \ker d \Big \},
\end{equation}
and the \emph{$J$-anti-invariant} real cohomology group is 
\begin{equation}
    H^- = \Big \{ [\alpha] \in H^2_{dR} (\R): \alpha \in A^-_\R \cap \ker d \Big \}.
\end{equation}
In the following we will denote a $(p,q)$-form $\alpha$ with $\alpha^{p,q}$.

\vspace{.3cm}
We call a \emph{solvmanifold} the quotient of a connected, simply connected and solvable Lie Group $G$, by a discrete and co-compact subgroup $\Gamma$ of $G$. We denote it by $\Gamma \backslash G$. If $G$ is also nilpotent, we call $\Gamma \backslash G$ a \emph{nilmanifold}.

\section{Dolbeault cohomology and spectral sequences}

Let $(M,J)$ be an almost complex manifold of real dimension $2m$. The exterior derivative decomposes as $d= \mu + \partial + \bar \partial + \bar \mu$, with bidegrees
\begin{equation}
    \abs{\mu} = (2,-1), \quad \abs{\partial} = (1,0), \quad \abs{\bar \partial} = (0,1), \quad \abs{\bar \mu}=(-1,2).
\end{equation}
The almost complex structure $J$ is integrable if and only if $\bar \mu \equiv 0$. The equation $d^2=0$ gives the relations
\begin{equation}\label{multidiff:equations}
\tag{$\square$}
\begin{cases}
\mu^2 =0 \\
\mu \partial + \partial \mu =0 \\
\mu \bar \partial + \bar \partial \mu + \partial^2 =0 \\
\mu \bar \mu + \partial \bar \partial + \bar \partial \partial + \bar \mu \mu =0 \\
\bar \mu \partial + \partial \bar \mu + \bar \partial^2 =0 \\
\bar \mu \bar \partial + \bar \partial \bar \mu =0 \\
\bar \mu^2=0
\end{cases}
\end{equation}
Since $\bar \mu^2=0$, $\bar \mu$ is a well defined cohomological operator and its cohomology is the $\bar \mu$-cohomology
\begin{equation}
    H^{p,q}_{\bar \mu} = \frac{\ker( \bar \mu \colon A^{p,q}_\C \longrightarrow A^{p-1,q+2}_\C)}{\Ima ( \bar \mu \colon A^{p+1,q-2}_\C \longrightarrow A^{p,q}_\C)}.
\end{equation}
In general, $\bar \partial$ does not square to $0$ on $M$, and its cohomology is not well defined. From the relation $\bar \partial \bar \mu + \bar \mu \bar \partial=0$, $\bar \partial$ is well defined on cohomology classes of $H^{p,q}_{\bar \mu}$, and thanks to $\bar \partial^2 + \bar \mu \partial + \partial \bar \mu=0$, it squares to $0$, thus we can define the \emph{Dolbeault cohomology} of the almost complex manifold $(M,J)$ as the $\bar \partial$-cohomology of the $\bar \mu$-cohomology, i.e.,
\begin{equation}
    H^{p,q}_{Dol} = \frac{\ker( \bar \partial \colon H^{p,q}_{\bar \mu} \longrightarrow H^{p,q+1}_{\bar \mu})}{\Ima ( \bar \partial \colon H^{p,q-1}_{\bar \mu} \longrightarrow H^{p,q}_{\bar \mu})}.
\end{equation}

The cohomology groups are well defined and if $\bar \mu=0$, they coincide with the usual Dolbeault cohomology groups for complex manifolds. As in the complex case, the Dolbeault cohomology is induced by a filtration on differential forms and has an associated spectral sequence that coincide with the Fr\"olicher spectral sequence of complex manifolds if $\bar \mu=0$. We recall here the construction: consider the filtration 
\begin{equation}
 F^p A^k_\C =   A^{p,q}_\C \cap \ker \bar \mu \oplus \bigoplus_{j \ge p+1} A^{j, k-j}_\C .
 \end{equation}
The filtration is bounded by 0 from below and by $m$ from above. With a shift of indexing, the filtration coincides with the filtration of a suitable multicomplex endowed with $4$ differentials, and the two spectral sequences are isomorphic, up to taking stages shifted by one step. Denote with 
\begin{equation}
    E^{p,q}_r, \qquad \text{$p$, $q = 0, \dots, m$,  $r \ge 1$},
\end{equation}
the stages of the sequence. Then $E^{p,q}_1 \cong H^{p,q}_{Dol}$ and the $(r+1)$-th stage is the cohomology of the previous one,
\begin{equation}
E^{p,q}_{r+1} \cong \frac{\ker ( d_r \colon E^{p,q}_r \longrightarrow E^{p+r, q-r+1}_r) }{\Ima ( d_r \colon E^{p-r,q+r-1}_r \longrightarrow E^{p,q}_r},
\end{equation}
with respect to the differential $d_r$, $\abs{d_r}=(r, -r+1)$. An explicit description up to isomorphism of the differential $d_r$ and of the stages of the spectral sequence is given in \cite{LWZ:multicomplex} for a general multicomplex, and was first described in \cite{Cordero} for the Fr\"olicher spectral sequence of a complex manifold. For stage 1, we have 
\begin{equation}
    E^{p,q}_1 \cong \frac{\{\alpha \in A^{p,q}_\C \cap \ker \bar \mu: \bar \partial \alpha\in \Ima \bar \mu \}}{\{ \eta \in A^{p,q}_\C: \eta = \bar \mu a + \bar \partial b \text{ and }\bar \mu b =0 \}},
\end{equation}
and if $\bar \partial \alpha = \bar \mu \varphi$, then
\begin{equation}
    d_1[ \alpha]_{E_1} = [\partial \alpha - \bar \partial \varphi]_{E_1}.
\end{equation}
In general, we have an isomorphism 
\begin{equation}\label{isomorphism:spectral}
E^{p,q}_r \cong \frac{X^{p,q}_r}{Y^{p,q}_r},
\end{equation}
where
\begin{multline}\label{numerator}
    X^{p,q}_r = \Big \{ \alpha^{p,q} \in A^{p,q}_\C : \text{ there exist } \alpha^{p+j,q-j}, \, \, j =1, \dots r,  \text{ satisfying } \\ 
    0 = \mu \alpha^{p+k,q-k} + \partial \alpha^{p+k+1,q-k-1} + \bar \partial \alpha^{p+k+2,q-k-2} + \bar \mu \alpha^{p+k+3,q-k-3} \quad k \in \Z \Big \}
\end{multline}
where the equation has to be read with $\alpha^{p+k,q-k}=0$ if $k$ is not $0,1, \dots, r$, and 
\begin{multline}\label{denominator}
    Y^{p,q}_r = \Big\{ \eta^{p,q} \in A^{p,q}_C: \text{ there exist } \eta^{p-j,q+j-1}, \, \, j=-1,0,\dots, r-1 \text{ satisfying } \\
    \eta^{p,q} = \mu \eta^{p-2,q+1} + \partial \eta^{p-1,q} + \bar \partial \eta^{p,q-1} + \bar \mu \eta^{p+1,q-2}, \\
    0 = \mu \eta^{p-k,q+k-1} + \partial \eta^{p-k+1,q+k-2} + \bar \partial \eta^{p-k+2,q+k-3} + \bar \mu \eta^{p-k+3,q+k-4} \quad k=3,\dots, r-1
    \Big\}.
\end{multline}

We say that the spectral sequence degenerates at stage $r$, for bidegree $(p,q)$, and write $E^{p,q}_r \cong E^{p,q}_\infty$, if 
\begin{equation}
    E^{p,q}_r \cong E^{p,q}_ j \qquad \forall j \ge r.
\end{equation} 
The spectral sequence degenerates at stage $r$ if it degenerates at stage $r$ for all bidegrees. 
\newline
At the $E_\infty$ stage, the degeneration of the spectral sequence induces a bigrading on the de Rham cohomology of the almost complex manifold. In particular, $E^{p,q}_\infty$ represents cohomology classes in $H^{p+q}_{dR} (\C)$ that admit a complex representative of bidegree $(p,q)$.

\section{Inclusion of \emph{J}-Invariant cohomology into Dolbeault cohomology}\label{contained}
Denote by $H^+_\C = H^+ \otimes \C$ the complexified of the $J$-invariant cohomology group. We are going to study under which conditions $H^+_\C$ is isomorphic to a subgroup of $H^{1,1}_{Dol}$ through the isomorphism defined in \eqref{isomorphism:spectral} between the Fr\"olicher spectral sequence and the quotients $X^{p,q}_r / Y^{p,q}_r$.

The results are stated in Theorem \ref{theo:inclusion:iso}, that gives a characterization valid in the almost complex case. At the end, we briefly investigate the stability of the condition under small deformations (in the integrable case).
\vspace{.3cm}

For almost complex manifolds of any dimension, $H^+_\C$ consists of complex de Rham cohomology classes in $H^2_{dR}(\C)$ that admit a representative of type $(1,1)$ (cf. \cite{DLZ:4manifolds}, Lemma 2.11). First of all, we could ask wether the identity on the representatives of classes in $H^+_\C$ induces the inclusion 
\begin{equation}\label{inclusion}
    H^+_\C \subseteq H^{1,1}_{Dol}.
\end{equation}
This is clearly false since any given $[\alpha] \in H^+_\C$ is given by
\[
[\alpha] = \{ \alpha^{1,1} + d (\beta^{1,0} + \beta^{0,1}) : \, d \alpha^{1,1}=0 \}
\]
and the class admits representatives that are not of pure bidegree $(1,1)$, while all the representatives in $H^{1,1}_{Dol}$ are of pure bidegree $(1,1)$. However this is the case up to the above mentioned isomorphism. We make use of the explicit description of the stages of the spectral sequence recalled in \eqref{numerator} and \eqref{denominator}. 

At bidegree $(1,1)$, the $r$-th stage of the spectral sequence is $E^{1,1}_r \cong X^{1,1}_r / Y^{1,1}_r$, where 
\begin{align*}
    X^{1,1}_1 &= \{ \alpha^{1,1}: 0= \bar \mu \alpha^{1,1} = \bar \partial \alpha^{1,1} + \bar \mu \alpha^{2,0} \}, \\
    X^{1,1}_2 &= \{ \alpha^{1,1}: 0= \bar \mu \alpha^{1,1} = \bar \partial \alpha^{1,1} + \bar \mu \alpha^{2,0} = \partial \alpha^{1,1} + \bar \partial \alpha^{2,0} \}, \\
    X^{1,1}_2 &= \{ \alpha^{1,1}: 0= \bar \mu \alpha^{1,1} = \bar \partial \alpha^{1,1} + \bar \mu \alpha^{2,0} = \partial \alpha^{1,1} + \bar \partial \alpha^{2,0} = \mu \alpha^{1,1} + \partial \alpha^{2,0} \}, \\
    Y^{1,1}_1 &= \{ \eta^{1,1} = \bar \partial \eta^{1,0},  \, \bar \mu \eta^{1,0}=0 \}, \\
    Y^{1,1}_2 &= Y^{1,1}_3 = \{ \eta^{1,1} = \bar \partial \eta^{1,0} + \partial \eta^{0,1}, \, \bar \mu \eta^{1,0} + \bar \partial \eta^{0,1}=0 \}.
\end{align*}

Note that the spectral sequence at bidegree $(1,1)$ degenerates at most at stage $3$ independently of the dimension of the manifold. In fact $\abs{d_r} = (r, -r+1)$, then
\[
0 \xrightarrow{\quad d_r \quad } E^{1,1}_r \xrightarrow{\quad d_r \quad} 0,
\]
if $r \ge 3$, and $E^{1,1}_\infty = E^{1,1}_3$. If $m=2$, then it degenerates at the $2$-nd stage. Indeed, if $M$ is compact, $J$ is integrable and $m=2$ (i.e., if $M$ is a compact complex surface) or if $M$ is a compact K\"ahler manifold of any dimension, it degenerates at the $1$-st stage and the Dolbeault cohomology group at bidegree $(1,1)$ is isomorphic to the complexified of $H^+$ (cf. \cite{li:zhang}, Theorem 2.16). 
\newline
If $J$ is integrable, we simply have
\begin{align*}
    X^{1,1}_1 &= \{ \alpha^{1,1}: 0= \bar \partial \alpha^{1,1} \}, \\
    X^{1,1}_2 &= \{ \alpha^{1,1}: 0= \bar \partial \alpha^{1,1} = \partial \alpha^{1,1} + \bar \partial \alpha^{2,0} \}, \\
    X^{1,1}_3 &= \{ \alpha^{1,1}: 0= \bar \partial \alpha^{1,1} = \partial \alpha^{1,1} + \bar \partial \alpha^{2,0} = \partial \alpha^{2,0} \}, \\
    Y^{1,1}_1 &= \{ \eta^{1,1} = \bar \partial \eta^{1,0} \}, \\
    Y^{1,1}_2 &= Y^{1,1}_3 = \{ \eta^{1,1} = \bar \partial \eta^{1,0} + \partial \eta^{0,1}, \, \bar \partial \eta^{0,1}=0 \}.
\end{align*}

Before stating the theorem, we make a consideration on the spectral sequence at bidegree $(0,1)$. We have 
\[
0 \xrightarrow{\quad d_3 \quad } E^{0,1}_3 \xrightarrow{\quad d_3 \quad} 0,
\]
so that $E^{0,1}_3 = E^{0,1}_r$ for all $r \ge 3$, but  $E^{0,1}_1 = E^{0,1}_2$ does not imply $E^{0,1}_2 = E^{0,1}_3$, since in general
\[
E^{0,1}_2 \xrightarrow{\quad d_2 \quad} E^{2,0}_2
\]
does not vanish. Consider now the condition $E^{0,1}_1 = E^{0,1}_2$, that will be used in the following theorem. The condition is the same of $X^{0,1}_1/Y^{0,1}_1= X^{0,1}_2/Y^{0,1}_2$. This is particular of bidegree $(0,1)$, and is a consequence of the fact that $Y^{0,1}_1=Y^{0,1}_2$, and that $\partial =0$ on $H^{0,1}_{\bar \partial}$ is equivalent to $d_1=0$ on $E^{0,1}_1$.

\begin{theorem}\label{theo:inclusion:iso}
Let $(M,J)$ be an almost complex manifold. Denote by $\varphi$ the isomorphism $E^{1,1}_r \cong X^{1,1}_r / Y^{1,1}_r$. Then the following conditions are equivalent
\begin{itemize}
    \item [(i)] $\varphi( H^+_\C ) \subseteq H^{1,1}_{Dol}$;
    \item [(ii)] $E^{0,1}_1 \overset{\varphi}{\cong} E^{0,1}_2$.
\end{itemize}
If (i) (or (ii)) holds, then the inclusion is injective.
\end{theorem}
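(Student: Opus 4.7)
The plan is to unpack (i) concretely using the lists of $X^{p,q}_r$ and $Y^{p,q}_r$ displayed above, and then verify each implication directly. Two $(1,1)$-representatives of the same class in $H^+_\C$ differ by $\gamma^{1,1}$, the $(1,1)$-part of $d(\eta^{1,0}+\eta^{0,1})$ for some complex $1$-form $\eta^{1,0}+\eta^{0,1}$; imposing that the $(2,0)$- and $(0,2)$-parts of $d(\eta^{1,0}+\eta^{0,1})$ vanish gives
\[
\gamma^{1,1}=\bar\partial\eta^{1,0}+\partial\eta^{0,1},\qquad \partial\eta^{1,0}+\mu\eta^{0,1}=0,\qquad \bar\mu\eta^{1,0}+\bar\partial\eta^{0,1}=0.
\]
Matching with the displayed formula for $Y^{1,1}_2=Y^{1,1}_3$ one sees $\gamma^{1,1}\in Y^{1,1}_3$ automatically, so $[\alpha^{1,1}]_{E^{1,1}_3}$ depends only on the de Rham class; condition (i) is the refinement $\gamma^{1,1}\in Y^{1,1}_1$ for every such $\gamma^{1,1}$, i.e.\ well-definedness of $[\alpha^{1,1}]_{dR}\mapsto[\alpha^{1,1}]_{E^{1,1}_1}\in H^{1,1}_{Dol}$.

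For (ii)$\Rightarrow$(i), the relation $\bar\mu\eta^{1,0}+\bar\partial\eta^{0,1}=0$ exhibits $\eta^{0,1}$ as an element of $X^{0,1}_1$ with witness $\eta^{1,0}$. Since (ii), as the text records, is equivalent to $d_1=0$ on $E^{0,1}_1$ and hence to $X^{0,1}_1\subseteq X^{0,1}_2$, one obtains a possibly different witness $\widetilde\alpha^{1,0}$ satisfying both $\bar\mu\widetilde\alpha^{1,0}+\bar\partial\eta^{0,1}=0$ and $\partial\eta^{0,1}+\bar\partial\widetilde\alpha^{1,0}=0$. Setting $c^{1,0}:=\eta^{1,0}-\widetilde\alpha^{1,0}$, a direct check yields $\bar\mu c^{1,0}=0$ and $\bar\partial c^{1,0}=\bar\partial\eta^{1,0}+\partial\eta^{0,1}=\gamma^{1,1}$, whence $\gamma^{1,1}\in Y^{1,1}_1$.

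For the converse (i)$\Rightarrow$(ii), take $[\beta^{0,1}]\in E^{0,1}_1$ with witness $\alpha^{1,0}$; one must show $d_1[\beta^{0,1}]=[\partial\beta^{0,1}+\bar\partial\alpha^{1,0}]_{E^{1,1}_1}=0$. The strategy is to correct $\alpha^{1,0}$ to $\alpha^{1,0}+u$ with $u\in\ker\bar\mu\cap A^{1,0}$ solving $\partial u=-(\partial\alpha^{1,0}+\mu\beta^{0,1})$, so that the pair $(\alpha^{1,0}+u,\beta^{0,1})$ fulfils both of the constraints displayed above; (i) then forces $\bar\partial(\alpha^{1,0}+u)+\partial\beta^{0,1}\in Y^{1,1}_1$, and since $\bar\partial u\in Y^{1,1}_1$ trivially, also $\partial\beta^{0,1}+\bar\partial\alpha^{1,0}\in Y^{1,1}_1$, giving the desired vanishing. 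The principal technical point is solvability of the correction equation: from $d^2(\alpha^{1,0}+\beta^{0,1})=0$ one extracts the identity $\bar\mu(\partial\alpha^{1,0}+\mu\beta^{0,1})=-\bar\partial(\bar\partial\alpha^{1,0}+\partial\beta^{0,1})$, which provides the necessary consistency, and I expect a careful iterative argument invoking (i) on auxiliary pairs to produce $u$.

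Finally, for injectivity, if $\alpha^{1,1}$ is $d$-closed with $\alpha^{1,1}\in Y^{1,1}_1$, write $\alpha^{1,1}=\bar\partial b^{1,0}$ with $\bar\mu b^{1,0}=0$; since $\mu b^{1,0}=0$ by bidegree, $db^{1,0}=\partial b^{1,0}+\alpha^{1,1}$, so $[\alpha^{1,1}]_{dR}=-[\partial b^{1,0}]_{dR}$ admits the $(2,0)$-representative $-\partial b^{1,0}$, which is $d$-closed. Running the analogous correction strategy of the previous paragraph now on $\partial b^{1,0}$ produces a $1$-form $\xi$ with $d\xi=-\alpha^{1,1}$, forcing $[\alpha^{1,1}]_{dR}=0$.
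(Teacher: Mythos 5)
Your direction (ii)$\Rightarrow$(i) is correct, and in fact it proves more than you state: since you never use the $(2,0)$-constraint $\partial\eta^{1,0}+\mu\eta^{0,1}=0$, your argument with $c^{1,0}=\eta^{1,0}-\widetilde\alpha^{1,0}$ shows the full containment $Y^{1,1}_2\subseteq Y^{1,1}_1$. The genuine gap is in (i)$\Rightarrow$(ii), and it originates in your formalization of (i). The theorem's condition (i) is phrased through the isomorphism $\varphi$, i.e.\ a class of $H^+_\C$ is to be regarded as an element of $E^{1,1}_3\cong X^{1,1}_3/Y^{1,1}_3$, so the ambiguity of a representative is a full coset of $Y^{1,1}_3=Y^{1,1}_2$, whose defining data only require the $(0,2)$-component $\bar\mu\eta^{1,0}+\bar\partial\eta^{0,1}$ to vanish, \emph{not} the $(2,0)$-component. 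You instead take the ambiguity to be the smaller set $W=\{\bar\partial\eta^{1,0}+\partial\eta^{0,1}:\ \partial\eta^{1,0}+\mu\eta^{0,1}=0,\ \bar\mu\eta^{1,0}+\bar\partial\eta^{0,1}=0\}$ of $d$-exact pure $(1,1)$-forms, so your (i) is the a priori weaker statement $W\subseteq Y^{1,1}_1$. To get (ii) from it you must pass from an arbitrary element of $Y^{1,1}_2$ to one of $W$, and this is exactly where your proof breaks: the correction $u\in A^{1,0}\cap\ker\bar\mu$ with $\partial u=-(\partial\alpha^{1,0}+\mu\beta^{0,1})$ is not shown to exist, and there is no reason it should — the $(2,0)$-form $\partial\alpha^{1,0}+\mu\beta^{0,1}$ need not be $\partial$-exact, and the consistency identity you extract from $d^2=0$ gives no solvability of a $\partial$-equation; ``a careful iterative argument invoking (i)'' is a hope, not a proof. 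With the correct reading of (i), no correction is needed at all: since $X^{1,1}_3\subseteq X^{1,1}_1$ and $Y^{1,1}_1\subseteq Y^{1,1}_2=Y^{1,1}_3$, condition (i) is literally $Y^{1,1}_2\subseteq Y^{1,1}_1$, and a generic element of $Y^{1,1}_2$ is precisely a representative of $d_1$ applied to a generic class of $E^{0,1}_1$, so (i)$\Leftrightarrow$(ii) follows at once, which is the paper's argument.

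The same issue affects your injectivity step. The claim to prove is injectivity of the induced map on $\varphi(H^+_\C)$, i.e.\ of $X^{1,1}_3/Y^{1,1}_3\to X^{1,1}_1/Y^{1,1}_1$, which is the one-line observation $Y^{1,1}_1\cap X^{1,1}_3\subseteq Y^{1,1}_1=Y^{1,1}_3$ once (i) holds. You instead attempt the stronger statement that a $d$-closed $(1,1)$-form lying in $Y^{1,1}_1$ is $d$-exact (injectivity from $H^+_\C$ itself), and your argument for it — producing $\xi$ with $d\xi=-\alpha^{1,1}$ by ``the analogous correction strategy'' applied to the closed $(2,0)$-form $\partial b^{1,0}$ — again rests on the unestablished correction step; as written it is not a proof, and the stronger statement is not what the theorem asserts.
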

\begin{proof}
Using the isomorphism $\varphi$, (i) is equivalent to prove that 
\[
\frac{X^{1,1}_3}{Y^{1,1}_3} \subseteq \frac{X^{1,1}_1}{Y^{1,1}_1},
\] 
while (ii) is equivalent to 
\[
\frac{X^{0,1}_1}{Y^{0,1}_1} = \frac{X^{0,1}_2}{Y^{0,1}_2}.
\]
By definition, we have $X^{1,1}_3 \subseteq X^{1,1}_1$, and $Y^{1,1}_1 \subseteq Y^{1,1}_2 = Y^{1,1}_3$. Then (i) holds if and only if $Y^{1,1}_1 = Y^{1,1}_2$. Again by definition, $Y^{1,1}_1$ is always a subset of $Y^{1,1}_2$. For the other inclusion, observe that $Y^{1,1}_2 \subseteq Y^{1,1}_1$ if and only if 
\[
\text{for all } \eta^{1,0}, \, \eta^{0,1} \text{ satisfying } \bar \mu \eta^{1,0} + \bar \partial \eta^{0,1}=0,
\]
there exists $\gamma^{1,0}$ such that $\bar \partial \eta^{1,0} + \partial \eta^{0,1} = \bar \partial \gamma^{1,0}$, with $\bar \mu \gamma^{1,0} =0$, i.e.,
\[
E^{0,1}_1 \xrightarrow{\quad d_1=0 \quad } E^{1,0}_1,
\]
or, equivalently, $X^{0,1}_1=X^{0,1}_2$.
Injectivity follows immediately from 
\[
Y^{1,1}_1 \cap X^{1,1}_3 \subseteq Y^{1,1}_1 = Y^{1,1}_3.
\]
\end{proof}

In Example \ref{not:closed}, we will show how the inclusion is not well defined if the condition $E^{0,1}_1 \cong E^{0,1}_2$ is not satisfied, in the case of a complex manifold. In remark \ref{remark:inclusion:welldef}, we will note that example \ref{example:sol3} shows that the same occurs at level of left-invariant Dolbeault cohomology on an almost complex manifolds (the left-invariant cohomology will be introduced in the following sections).
\vspace{.3cm}

For the inclusion up to isomorphism, the key condition is the isomorphism between two terms of the spectral sequence
\begin{equation}\label{degeneracy}
\tag{$\ast$}
    E^{0,1}_1=E^{0,1}_2,
\end{equation}
thus we find meaningful considering the openness and closedness of \eqref{degeneracy} under small deformations of complex structure. \\
Let $M$ be a compact complex manifold and $\{ J_t \} $ a deformation of complex structures on $M$, with small $t \in \C$.
An easy calculation shows that if we assume \eqref{degeneracy} for $t=0$, the function 
\[
e^{0,1}_2 (t) = \dim E^{0,1}_2 (t),
\]
is upper semicontinuous with respect to $t$. Indeed we have upper semicontinuity of $h^{p,q}(t) = \dim H^{p,q}_{\bar \partial} (t)$. Then
\[
e^{0,1}_2 (t) \le h^{0,1} (t) \le h^{0,1} (0) = e^{0,1}_2 (0).
\]
We ask the following 
\medskip

\textbf{Question:} let $(M, J_0)$ be a compact complex manifold. Is condition \eqref{degeneracy} stable under small deformations of the complex structure $J_0$? 
\medskip

In all the examples for which we performed computations, the stability is satisfied.

On the other side, as a consequence of Example \ref{not:closed}, we have that \eqref{degeneracy} is not a closed condition. More precisely, the example shows the following proposition.

\begin{proposition}\label{not:closed:prop}
There exist complex manifolds $(M,J)$ such that 
\begin{itemize}
\item [(i)] the spectral sequence satisfies
\[
E^{0,1}_1  \neq E^{0,1}_2,
\]

\item [(ii)] there are curves of complex structures $ \{ J_t \}$ satysfing $J_0=J$ and
\[
E^{0,1}_1 (t) = E^{0,1}_2 (t)
\]
for all small $t \neq 0$.
\end{itemize}

\end{proposition}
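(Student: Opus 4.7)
The plan is to exhibit a compact complex manifold $(M,J_0)$ together with a real one-parameter family of deformations $\{J_t\}$ satisfying $J_0 = J$, and to verify the two required conditions directly from the definitions. Since the proposition is explicitly phrased as a consequence of Example \ref{not:closed}, the proof reduces to extracting the relevant information from that example; the rest is formal.

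First I would translate the statement into a workable criterion. In the integrable case $\bar\mu = 0$, so $E^{0,1}_1 \cong H^{0,1}_{\bar\partial}$ and the first differential $d_1 \colon E^{0,1}_1 \to E^{1,0}_1$ is induced by $\partial$; hence $E^{0,1}_1 = E^{0,1}_2$ holds precisely when $\partial$ vanishes on $H^{0,1}_{\bar\partial}$. Thus what must be produced is (i) a $\bar\partial$-closed $(0,1)$-form on $(M,J_0)$ whose $\partial$ represents a nontrivial class in $H^{1,0}_{\bar\partial}$, and (ii) for each $t \neq 0$ small, the vanishing of the corresponding map for $(M,J_t)$.

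For (i), the computation is finite-dimensional once the model is fixed: on a complex solvmanifold whose Frölicher spectral sequence does not degenerate at the first page in the relevant bidegree, one writes out the invariant complex on the Lie algebra and exhibits an explicit $(0,1)$-form with the required property. This is the content to be imported from Example \ref{not:closed} and amounts to linear algebra on the invariant differential forms.

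For (ii), rather than computing the deformed cohomology from scratch, I would combine two semicontinuity bounds, since $E^{0,1}_2(t) \subseteq E^{0,1}_1(t)$ by construction: $\dim E^{0,1}_1(t) = h^{0,1}(t)$ is upper semicontinuous and hence at most $h^{0,1}(0)$, while $\dim E^{0,1}_2(t)$ is upper semicontinuous and hence at most $\dim E^{0,1}_2(0)$. Choosing the deformation so that $h^{0,1}(t)$ drops, for $t \neq 0$, to a value no greater than $\dim E^{0,1}_2(0)$ forces equality of the two stages. The main obstacle is exactly here: one must exhibit a curve $J_t$ along which the Hodge number $h^{0,1}$ genuinely jumps downward, a phenomenon that requires choosing a solvmanifold (of Nakamura type, for instance) with sufficient moduli for the invariant complex structures. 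Once such a curve is available, the conclusion (ii) is immediate and no further computation is needed.
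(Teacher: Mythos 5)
Your criterion and your plan for part (i) coincide with the paper's: in the integrable case $E^{0,1}_1\cong H^{0,1}_{\bar\partial}$ and the condition $E^{0,1}_1=E^{0,1}_2$ is the vanishing of the map induced by $\partial$ on $H^{0,1}_{\bar\partial}$, which one checks on an explicit invariant model (only note the bidegrees: $d_1$ maps $E^{0,1}_1$ to $E^{1,1}_1$, so the relevant target is $H^{1,1}_{\bar\partial}$, not $H^{1,0}_{\bar\partial}$). The genuine gap is in your argument for part (ii). First, the upper semicontinuity of $\dim E^{0,1}_2(t)$ that you invoke is exactly what is not available here: the paper derives it only under the hypothesis \eqref{degeneracy} at $t=0$, through $e^{0,1}_2(t)\le h^{0,1}(t)\le h^{0,1}(0)=e^{0,1}_2(0)$, and in the present situation \eqref{degeneracy} fails at $t=0$ by design; since $\dim E^{0,1}_2(t)=h^{0,1}(t)-\operatorname{rank}\bigl(d_1\colon E^{0,1}_1(t)\to E^{1,1}_1(t)\bigr)$ mixes an upper semicontinuous term with a lower semicontinuous rank, no semicontinuity is automatic at a point where $h^{0,1}$ jumps. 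Second, and more decisively, even granting both bounds your conclusion does not follow: from $\dim E^{0,1}_2(t)\le h^{0,1}(t)\le \dim E^{0,1}_2(0)$ one cannot conclude $\dim E^{0,1}_2(t)=h^{0,1}(t)$; two upper bounds never force equality, and nothing in your argument excludes $E^{0,1}_2(t)\subsetneq E^{0,1}_1(t)$ for all $t$.

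To close the argument you need a lower bound on $\dim E^{0,1}_2(t)$, not another upper bound, e.g. $\dim E^{0,1}_2(t)\ge\dim E^{0,1}_\infty(t)\ge b_1-h^{1,0}(t)$ with $b_1$ topological, which does work for Nakamura-type deformations, or else a direct computation of the deformed structure equations. The paper takes the computational route: it exhibits the explicit family of invariant complex structures on a $6$-dimensional nilmanifold from Ceballos--Otal--Ugarte--Villacampa, where $E^{0,1}_1=E^{0,1}_2$ for $\abs{\sin t}\neq 1$ but $h^{0,1}=3>2=\dim E^{0,1}_2$ at $\sin t=1$, and the deformation $t\,dz^{\bar1}\otimes\partial/\partial z_1$ of the holomorphically parallelizable Nakamura manifold, where Angella--Kasuya's computation gives degeneration at the first stage for all $t\neq0$ while $h^{0,1}(0)=3$, $\dim E^{0,1}_2(0)=1$. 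Note also that both your (i) and (ii) are carried out on left-invariant forms, so one additionally needs the (cited, nontrivial) fact that the invariant complex computes the actual Dolbeault cohomology and Fr\"olicher spectral sequence of the compact quotient, which the paper obtains from the references it relies on.
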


The same is true at left-invariant level for almost complex manifold as shown in Example \ref{deformations:sol3}.

\section{Dolbeault cohomology of Lie Algebras}

Let $\mathfrak{g}$ be a real Lie Algebra of dimension $2m$ and $J$ a complex structure on the vector space $\g$. We call $J$ an \emph{almost complex structure} on the Lie Algebra $\g$. Consider the Chevalley-Eilenberg complex of $\g$, $(A^*_\g, d)$. Recall that the differential is defined as the dual of the Lie bracket $[ \cdot, \cdot]$ for $1$-forms, and extended as a derivation to all forms. $J$ induces a bidegree on the complexified of the Chevalley-Eilenberg complex,
\begin{equation}
    A^k_{\g^\C} = \bigoplus_{p+q=k} A^{p,q}_{\g^\C}.
\end{equation}
The \emph{Dolbeault cohomology of the Lie Algebra $\g$} is the $\bar \partial$-cohomology of the $\bar \mu$-cohomology groups,
\begin{equation}
H^{p,q}_{Dol} (\g) = H^q ( H^{p,*}_{\bar \mu} (\g), \bar \partial),
\end{equation}
and the spectral sequence $\{ E^{*,*}_r (\g) \}_{r \in \N}$ associated to the Hodge filtration
\begin{equation}
    F^p A^k_{\g^\C} =   A^{p,q}_{\g^\C} \cap \ker \bar \mu \oplus \bigoplus_{j \ge p+1} A^{j, k-j}_{\g^\C} .
\end{equation}
is the \emph{spectral sequence of $\g$}.
In the setting of Lie Algebras, it's possible to compute easily the cohomology as a matter of linear algebra and all the spaces are finite dimensional. We set 
\begin{equation}
    b^k_\g = \dim_\C H^k_{dR} (\g, \C),
\end{equation}
and
\begin{equation}
    h^{p,q}_\g = \dim_\C H^{p,q}_{Dol} (\g).
\end{equation}
As a consequence of existence of the spectral sequence, we have Fr\"olicher inequalities (cf. \cite{fro}) for the almost complex case.

\begin{theorem}[\cite{CW:dolbeaultcohomology} Proposition 5.1]\label{fro:ineq:lie}
Let $\mathfrak{g}$ be a real Lie Algebra, $\dim \mathfrak{g} = 2m $ and $J$ an almost complex structure on $\mathfrak{g}$. Then
\begin{equation}
    b^k_g \le \sum \limits_{p+q=k} h^{p,q}_\g.
\end{equation}
Denote by $\chi (\g) = \sum\limits_k (-1)^k b^k$ the Euler characteristic of $\g$. Then    
\begin{equation}
    \chi (\g) = \sum \limits_{p,q} (-1)^{p+q}\,  h^{p,q}_\g.
\end{equation}
\end{theorem}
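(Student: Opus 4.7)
The plan is to reduce the statement to the standard Frölicher-type argument for any bounded spectral sequence of finite-dimensional vector spaces, applied here to $\{E^{*,*}_r(\g)\}$ arising from the Hodge filtration on the Chevalley--Eilenberg complex. I would rely on three facts recalled in Sections 3 and 5: first, that $E^{p,q}_1(\g) \cong H^{p,q}_{Dol}(\g)$, so the dimensions at page one are exactly $h^{p,q}_\g$; second, that since $F^p A^k_{\g^\C}$ is a bounded filtration of finite-dimensional spaces, the spectral sequence degenerates at some finite stage $E_\infty$; and third, that $E_\infty$ is the associated graded of a filtration on $H^k_{dR}(\g, \C)$, so that
\[
b^k_\g = \sum_{p+q=k} \dim_\C E^{p,q}_\infty(\g).
\]

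With these in hand, the inequality is essentially tautological: since $E^{p,q}_{r+1}$ is computed as the cohomology of $(E^{*,*}_r, d_r)$ at position $(p,q)$, it is a subquotient of $E^{p,q}_r$ and hence $\dim E^{p,q}_{r+1} \le \dim E^{p,q}_r$. Iterating from $r=1$ up to $r=\infty$ yields $\dim E^{p,q}_\infty \le h^{p,q}_\g$, and summing over $p+q=k$ gives the Frölicher inequality.

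For the Euler characteristic identity, the key observation is that the bidegree of $d_r$ is $(r, -r+1)$, so its total degree is $+1$. Consequently the signed sum $\sum_{p,q}(-1)^{p+q} \dim E^{p,q}_r$ is preserved when passing from page $r$ to page $r+1$: writing
\[
\dim E^{p,q}_{r+1} = \dim E^{p,q}_r - \dim \Ima \bigl(d_r \colon E^{p,q}_r \to E^{p+r, q-r+1}_r \bigr) - \dim \Ima \bigl(d_r \colon E^{p-r, q+r-1}_r \to E^{p,q}_r \bigr),
\]
the two images contribute with opposite signs to the alternating sum (because $(-1)^{p+q} = -(-1)^{(p-r)+(q+r-1)}$) and cancel in pairs. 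Comparing the $r=1$ page to the $r=\infty$ page then gives
\[
\sum_{p,q}(-1)^{p+q} h^{p,q}_\g = \sum_{p,q}(-1)^{p+q} \dim E^{p,q}_\infty = \sum_k (-1)^k b^k_\g = \chi(\g).
\]

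I do not foresee a serious obstacle: the only point that requires care is convergence of the spectral sequence to $H^*_{dR}(\g, \C)$, but this is automatic in the finite-dimensional Chevalley--Eilenberg setting from boundedness of the Hodge filtration, and was already spelled out in Section 3. Once convergence and the identification $E_1 \cong H^{*,*}_{Dol}(\g)$ are granted, both statements are formal consequences of the structure of a bounded spectral sequence of finite-dimensional vector spaces.
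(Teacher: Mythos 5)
Your proposal is correct and follows essentially the same route as the source: the paper states this result without proof, quoting \cite{CW:dolbeaultcohomology} and noting that it is ``a consequence of the existence of the spectral sequence,'' which is precisely your argument --- identify $E_1^{p,q}(\mathfrak{g})\cong H^{p,q}_{Dol}(\mathfrak{g})$, use boundedness of the Hodge filtration on the finite-dimensional Chevalley--Eilenberg complex to get convergence to $H^{*}_{dR}(\mathfrak{g},\mathbb{C})$, and then apply the standard Fr\"olicher-type counting (subquotients decrease dimension; the degree-one differential $d_r$ preserves the alternating sum). The only point worth stating explicitly, which you do acknowledge, is that $E_\infty$ is the associated graded of a filtration on $H^k_{dR}(\mathfrak{g},\mathbb{C})$, so that $b^k_{\mathfrak{g}}=\sum_{p+q=k}\dim_{\mathbb{C}}E^{p,q}_\infty(\mathfrak{g})$; with that granted, both claims follow formally as you describe.
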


Consider now a $J$-compatible inner product $\langle \cdot, \cdot \rangle $ on $\g$. It is possible to develop an harmonic theory for differential operators that makes easier some computations of cohomology groups. The \emph{Hodge $*$ operator} is defined as usual by the relation
\begin{equation}
    \langle \varphi, \eta \rangle Vol = \varphi \wedge * \bar \eta,
\end{equation}
with $Vol$ denoting the \emph{volume form} in $A^{2m}_{\g^\C}$, and $\varphi$, $\eta \in A^{p,q}_{\g^\C}$. Taken $\delta$ among $d$, $\mu$, $\partial$, $\bar \partial$, $\bar \mu$, the \emph{formal adjoint of $\delta$} is the operator
\begin{equation}
    \delta^* = - * \bar \delta *.
\end{equation}
The \emph{$\delta$-Laplacian} is defined as
\begin{equation}
    \Delta_\delta = \delta \delta^* + \delta^* \delta,
\end{equation}
and the space of \emph{$\delta$-harmonic $(p,q)$-forms} is
\begin{equation}
\mathcal{H}^{p,q}_\delta = A^{p,q}_{\g^\C} \cap \ker \Delta_\delta.
\end{equation}
On a Lie Algebra, the above spaces are always finite dimensional.
\newline
The operator $\bar \mu^*$ is the adjoint of $\bar \mu$ with respect to $\langle \cdot , \cdot \rangle$, and $A^{p,q}_{\g^\C}$ admits a Hodge decomposition
\begin{equation}
A^{p,q}_{\g^\C} = \mathcal{H}^{p,q}_{\bar \mu} \oplus \bar \mu (A^{p+1,q-2}_{\g^\C}) \oplus \bar \mu^* (A^{p-1,q+2}_{\g^\C}).
\end{equation}
In particular, cohomology classes in $H^{p,q}_{\bar \mu} (\g)$ admit a $\bar \mu$-harmonic representative. For Lie Algebras, the Dolbeault cohomology can also be obtained as the cohomology of the operator $\bar \partial_{\bar \mu}$, defined on $\bar \mu$-harmonic $(p,q)$-forms as 
\begin{equation}
    \bar \partial_{\bar \mu} (\varphi) = \mathcal{H}_{\bar \mu} ( \bar \partial \varphi),
\end{equation}
where we denoted with $\mathcal{H}_{\bar \mu}$ the projection on $\bar \mu$-harmonic forms. It can be checked that $\bar \partial_{\bar \mu}$ is a cohomological operator and
\begin{equation}
   H^{p,q}_{Dol} (\g) \cong \frac{\ker (\bar \partial_{\bar \mu} \colon \mathcal{H}^{p,q}_{\bar \mu} \to \mathcal{H}^{p,q+1}_{\bar \mu})}{\Ima (\bar \partial_{\bar \mu}: \mathcal{H}^{p,q-1}_{\bar \mu} \to \mathcal{H}^{p,q}_{\bar \mu})}.
\end{equation}
Taking the adjoint $\bar \partial_{\bar \mu}^*:= \mathcal{H}_{\bar \mu} \circ \bar \partial^*$, we can consider the associated Laplacian 
\begin{equation}
    \Delta_{\bar \partial_{\bar \mu}}= \bar \partial_{\bar \mu}^* \bar \partial_{\bar \mu} + \bar \partial_{\bar \mu} \bar \partial_{\bar \mu}^*.
\end{equation}
The space of $\bar \partial_{\bar \mu}$-harmonic forms is
\begin{equation}
    \mathcal{H}^{p,q}_{\bar \partial_{\bar \mu}}  = \mathcal{H}^{p,q}_{\bar \mu} \cap \ker \Delta_{\bar \partial_{\bar \mu}}.
\end{equation}
The main obstruction to using $\bar \partial_{\bar \mu}$ to study the Dolbeault cohomology, lies in the fact that in general, $\bar \partial_{\bar \mu}^*$ is not the metric adjoint of $\bar \partial_{\bar \mu}$. However, this is the case in some favorable situation, in particular for compact Lie Groups or for the left-invariant cohomology of solvmanifolds.

\section{Dolbeault cohomology of solvmanifolds endowed with a left-invariant almost complex structure}\label{solvmanifolds}

In this section we extend results obtained from J. Cirici and S. O. Wilson in \cite{CW:dolbeaultcohomology} for nilmanifolds, to the case of solvmanifolds, showing that the left-invariant Dolbeault cohomology always satisfies Serre duality and is described by $\bar \partial_{\bar \mu}$.
\vspace{.5cm}
\newline
Let $M= \Gamma \backslash G$ be a solvmanifold. There is a natural left action $G \times M \longrightarrow M$. Consider the three graded algebra: 
\begin{itemize}
\item $A^*_\mathfrak{g}$, algebra of differential forms on $\mathfrak{g}$;
\item $^L A^*(M)$, algebra of left-invariant forms on $M$;
\item $A^*(M)$, algebra of differential forms on $M$.
\end{itemize}
There is always an isomorphism
\begin{equation}\label{iso:g:gamma:forms}
A^*_\mathfrak{g} \cong \, ^L A^*(M),
\end{equation}
and an inclusion
\begin{equation}
    ^L A^*(M) \xhookrightarrow{\quad} A^*(M).
\end{equation}
Both clearly extend to the complexified algebras. The isomorphism of the Chevalley-Eilenberg complex with left-invariant forms, induces an isomorphism of the de Rham cohomology of $\g$ and the left-invariant de Rham cohomology of $M$ (i.e., the cohomology of left-invariant forms), 
\begin{equation}
    H^*_{dR} (\g; \C) \cong \, ^L H^*_{dR} (M; \C),
\end{equation}
but in general the inclusion is not a quasi-isomorphism, and the left-invariant de Rham cohomology is not isomorphic to the de Rham cohomology of $M$. However, that happens in some favorable case. In particular, if $M$ itself is a Lie Group, or if $G$ is completely solvable, then
\begin{equation}\label{iso:invariant}
H^*_{dR} ( \mathfrak{g}; \C) \cong H^*_{dR} (M; \C).
\end{equation}
\vspace{.3cm}
\newline
Let $J$ be an almost complex structure on the Lie Algebra $\g$. Then via the isomorphism (\ref{iso:g:gamma:forms}), the bigrading of $A^{*,*}_{\g^\C}$ induces a bigrading on the left-invariant complex forms, that corresponds to a left-invariant almost complex structure (still denoted by $J$), and vice-versa, every left-invariant almost complex structure induces an almost complex structure on $J$. $J$ is integrable on $\g$ if and only if it is integrable as an almost complex structure on left-invariant forms. We extend $J$ to non-invariant vector fields by linearity over functions. The extended $\tilde{J}$ is integrable iff its Nijenhuis tensor $N_{\tilde{J}}$ vanishes iff $N_J$ vanishes iff $J$ is integrable on $\g$.
\vspace{.3cm}
\newline
The \emph{left-invariant Dolbeault cohomology of $M$} is defined as the Dolbeault cohomology of the complexified Lie Algebra,
\begin{equation}
^L H^{p,q}_{Dol} (M) = H^{p,q}_{Dol} (\g),
\end{equation}
and the left-invariant spectral sequence of $M$ as the spectral sequence associated to the Dolbeault cohomology of $\g$,
\begin{equation}
    ^L E^{*,*}_r = E^{*,*}_r (\g).
\end{equation}
$M$ also admits a Dolbeault cohomology as an almost complex manifold $(M, \tilde{J})$. It is not known if this cohomology coincides with the left-invariant one, even in the case when (\ref{iso:invariant}) holds, but this is conjectured to be true for nilmanifolds and integrable almost complex structures (cf. \cite{Rollenske}). 

We want to prove the following theorem.
\begin{theorem}\label{main:solvmanifold}
Let $M=\Gamma \backslash G$ be a solvmanifold. Then for all $(p,q)$, its left-invariant Dolbeault cohomology is obtained as $\bar \partial_{\bar \mu}$-harmonic forms,
\begin{equation}\label{harmonic:invariant}
^L H^{p,q}_{Dol}(M) \cong \, ^L \mathcal{H}^{p,q}_{\bar \partial_{\bar \mu}}.    
\end{equation}
The left-invariant spectral sequence satisfies Serre duality at every stage
\begin{equation}\label{invariant:serre}
    ^L E^{p,q}_r \cong \, ^L E^{m-p,m-q}_r, \qquad \forall r \ge 1.
\end{equation}
\end{theorem}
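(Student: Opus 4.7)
The plan is to fix a left-invariant Hermitian metric on $M$ (obtained by averaging any $J$-compatible inner product on the vector space $\g$) and reduce both assertions to finite-dimensional linear algebra on the Chevalley--Eilenberg complex $A^{*,*}_{\g^\C}$. The essential global ingredient is the unimodularity of $G$, which is forced by the existence of the cocompact lattice $\Gamma$. Unimodularity is equivalent to $d$ vanishing on $A^{2m-1}_{\g^\C}$, and hence provides honest integration-by-parts identities at the Lie algebra level; in particular the formal adjoints $\delta^{*}=-*\bar\delta*$ coincide with the genuine metric adjoints on left-invariant forms.

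For \eqref{harmonic:invariant} I would run ordinary Hodge theory inside the finite-dimensional complex $A^{*,*}_{\g^\C}$. The $\bar\mu$-Hodge decomposition identifies $H^{p,q}_{\bar\mu}(\g)$ with $\mathcal{H}^{p,q}_{\bar\mu}$, on which $\bar\partial_{\bar\mu}=\mathcal{H}_{\bar\mu}\circ\bar\partial$ is a differential. The only non-trivial step is verifying that $\bar\partial^{*}_{\bar\mu}=\mathcal{H}_{\bar\mu}\circ\bar\partial^{*}$ is the genuine metric adjoint of $\bar\partial_{\bar\mu}$ on $\mathcal{H}^{p,q}_{\bar\mu}$; this is a short computation exploiting that the orthogonal projection $\mathcal{H}_{\bar\mu}$ is self-adjoint and fixes $\bar\mu$-harmonic forms. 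Standard finite-dimensional Hodge decomposition applied to the cochain complex $(\mathcal{H}^{p,*}_{\bar\mu},\bar\partial_{\bar\mu})$ then yields \eqref{harmonic:invariant}.

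For the Serre duality part \eqref{invariant:serre} I would exploit the non-degenerate wedge pairing $A^{p,q}_{\g^\C}\times A^{m-p,m-q}_{\g^\C}\to\C$ defined by $\alpha\wedge\eta=\langle\alpha,\eta\rangle\,Vol$. Unimodularity and bidegree separation give the identity $\langle\delta\alpha,\eta\rangle=\pm\langle\alpha,\delta\eta\rangle$ for every $\delta\in\{\mu,\partial,\bar\partial,\bar\mu\}$, making the multicomplex and the Hodge filtration self-dual of bidegree $(m,m)$. Using the explicit zig-zag description \eqref{numerator}--\eqref{denominator} and repeatedly transporting the $\delta$'s across the wedge, one checks that $X^{p,q}_{r}$ annihilates $Y^{m-p,m-q}_{r}$, so the wedge pairing descends to a well-defined bilinear form ${}^{L}E^{p,q}_{r}\times{}^{L}E^{m-p,m-q}_{r}\to\C$. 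Non-degeneracy is then established by induction on $r$: the base case $r=1$ uses part \eqref{harmonic:invariant} together with the Hodge star, while the inductive step invokes the general fact that a non-degenerate pairing on a finite-dimensional cochain complex compatible with the differential descends non-degenerately to its cohomology, here applied to $(E_{r},d_{r})$. The main obstacle is precisely this non-degeneracy at pages $r\ge 2$: the higher differentials $d_{r}$ are not realized as metric adjoints of fixed operators on an inner product space, so the Hodge-star argument cannot be extended directly, and the annihilator/induction route through integration by parts is the cleanest available.
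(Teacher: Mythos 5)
Your argument is correct in substance and rests on the same backbone as the paper's proof, but it reroutes the second half. The paper's proof is short and citation-based: existence of the lattice forces $G$ to be unimodular, Lemma \ref{lemma:unimodular} then gives $d\equiv 0$ on $A^{2m-1}_{\g^\C}$ (condition \eqref{suff:cond:2}), after which \eqref{harmonic:invariant} and Serre duality at the first stage follow by quoting Propositions \ref{lie:dolbeault} and \ref{lie:serre} (Cirici--Wilson), and the persistence \eqref{invariant:serre} for all $r\ge 1$ is obtained by checking the hypotheses of Milivojevi\'c's theorem \cite{Mil:serre}: Serre duality on $^LE_1$, vanishing of $d_1$ in total degree $2m-1$ (since $\partial,\bar\partial,\bar\mu$ all vanish on $A^{2m-1}_{\g^\C}$), and $^LE^{m,m}_1\cong\C$. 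You instead re-prove both cited ingredients: finite-dimensional Hodge theory for $(\mathcal{H}^{p,*}_{\bar\mu},\bar\partial_{\bar\mu})$ in place of Proposition \ref{lie:dolbeault} (fine, and the averaging you mention is unnecessary --- an inner product on $\g$ is already left-invariant), and, in place of the citation, a direct persistence argument via the wedge pairing into bidegree $(m,m)$, descent to each page, and induction using the fact that a perfect pairing on a finite-dimensional complex compatible with the differential induces a perfect pairing on cohomology (true over $\C$ by dualizing the complex). What the paper buys is brevity; what you buy is self-containedness, at the cost of having to carry out the one genuinely delicate step, which you only gesture at: verifying that the pairing is well defined on $^LE_r\times{}^LE_r$ and compatible with $d_r$ for $r\ge 2$, i.e.\ the telescoping computation with the zig-zag representatives of \eqref{numerator}--\eqref{denominator}, using both $d\equiv 0$ on $A^{2m-1}_{\g^\C}$ and the zig-zag equations of \emph{both} factors; you should also note explicitly that $Y^{m,m}_r=0$, so $^LE^{m,m}_r\cong\C$ for all $r$, which is needed for the pairing to take values in $\C$ --- this is exactly the hypothesis the paper checks before invoking \cite{Mil:serre}. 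With that bookkeeping supplied, your route is a valid, essentially self-contained reproof of the cited persistence theorem in this Lie-algebra setting.
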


Before giving the proof of the theorem, we make some preliminary observation and state some useful Lemma. \\
The study of left-invariant cohomology is made easy if $\bar \partial_{\bar \mu}^*$ is the metric adjoint of $\bar \partial_{\bar \mu}$. A sufficient condition for this to happen (cf. \cite{CW:dolbeaultcohomology} Lemma 5.2) is 
\begin{equation}\label{suff:cond:1}
\partial \equiv 0 \qquad \text{on $A^{m-1,m}_{\g^\C}$}.
\end{equation}
Equivalent conditions to (\ref{suff:cond:1}) are
\begin{equation}\label{suff:cond:2}
d \equiv 0 \text{ on $A^{2m-1}_{\g^\C}$},
\end{equation}
and
\begin{equation}\label{suff:cond:3}
    H^{2m}_{dR} ( \mathfrak{g}; \mathbb{C} ) \cong \mathbb{C}.
\end{equation}
We recall now some consequences of \eqref{suff:cond:2}, that will be used to prove Theorem \ref{main:solvmanifold}. The fact that $\bar \partial_{\bar \mu}^*$ is adjoint of $\bar \partial_{\bar \mu}$, allows to use harmonic theory to establish an isomorphism from $\bar \partial_{\bar \mu}$-harmonic forms to the Dolbeault cohomology of $\g$, (and consequently to the left-invariant cohomology of $M$).

\begin{proposition}[\cite{CW:dolbeaultcohomology} Theorem 5.4]\label{lie:dolbeault}
Let $\mathfrak{g}$ be a Lie Algebra and $H^{2m}_{dR} (\mathfrak{g}; \C) \cong \mathbb{C}$. Then
\begin{equation}
    \mathcal{H}^{p,q}_{\bar \mu} = \mathcal{H}^{p,q}_{\bar \partial_{\bar \mu}} \oplus \bar \partial_{\bar \mu} (\mathcal{H}^{p,q-1}_{\bar \mu} ) \oplus \bar \partial_{\bar \mu}^* (\mathcal{H}^{p,q+1}_{\bar \mu}),
\end{equation}
and $\bar \partial_{\bar \mu}$-harmonic forms are isomorphic to the Dolbeault cohomology of $\g$,
\begin{equation}
    H^{p,q}_{Dol} (\g) \cong \mathcal{H}^{p,q}_{\bar \partial_{\bar \mu}}.
\end{equation}
\end{proposition}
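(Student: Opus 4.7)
The plan has two main threads: first, verifying the hypothesis of Proposition \ref{lie:dolbeault} so that (\ref{harmonic:invariant}) follows immediately; second, constructing Serre duality as a perfect pairing on each page of $^L E^{*,*}_r$.

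For the first thread, I use the classical fact that a simply connected solvable Lie group $G$ admitting a discrete cocompact subgroup $\Gamma$ must be unimodular: the existence of a finite $G$-invariant volume on $\Gamma \backslash G$ forces the continuous modular homomorphism $\Delta_G \colon G \to \R^+$ to be trivial on $\Gamma$, and then $\Delta_G(G)$ is simultaneously a connected abelian subgroup of $\R^+$ and the continuous image of the compact quotient $\Gamma \backslash G$, hence trivial. Unimodularity of $G$ is equivalent to $\operatorname{tr} \operatorname{ad}_X = 0$ for every $X \in \g$, which via the Koszul formula for $d \colon A^{2m-1}_{\g^\C} \to A^{2m}_{\g^\C}$ is in turn equivalent to condition (\ref{suff:cond:2}), hence to (\ref{suff:cond:3}). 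With (\ref{suff:cond:3}) established, Proposition \ref{lie:dolbeault} gives $H^{p,q}_{Dol}(\g) \cong \mathcal{H}^{p,q}_{\bar \partial_{\bar \mu}}$, which is precisely (\ref{harmonic:invariant}) by definition of the left-invariant cohomology.

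For Serre duality, I start from the nondegenerate wedge pairing
\[
A^{p,q}_{\g^\C} \otimes A^{m-p, m-q}_{\g^\C} \longrightarrow A^{m,m}_{\g^\C} \cong \C,
\]
where one-dimensionality of the target at bidegree $(m,m)$ is (\ref{suff:cond:3}) and perfectness on forms is the standard duality on the exterior algebra of $\g^*_\C$. Because $d$ vanishes on $A^{2m}_{\g^\C}$ for bidegree reasons and $d = \mu + \partial + \bar \partial + \bar \mu$, matching bidegrees yields transpose relations $\langle \delta \alpha, \beta \rangle = \pm \langle \alpha, \delta \beta \rangle$ for each $\delta \in \{\mu, \partial, \bar \partial, \bar \mu\}$ in the appropriate complementary bidegrees. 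Using the explicit description (\ref{numerator})--(\ref{denominator}) of $^L E^{p,q}_r \cong X^{p,q}_r / Y^{p,q}_r$, one then checks that $Y^{p,q}_r$ pairs to zero against $X^{m-p, m-q}_r$ (and symmetrically), so the pairing descends to a nondegenerate pairing on $^L E^{p,q}_r \otimes \, ^L E^{m-p, m-q}_r$, giving (\ref{invariant:serre}).

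I expect the main obstacle to lie in the inductive descent through the pages: one must show that the differential $d_r$ of bidegree $(r, -r+1)$ is self-transpose under the pairing, and this requires pairing the zig-zag chain of auxiliary forms $\alpha^{p+k, q-k}$ that witnesses membership in $X^{p,q}_r$ against the corresponding chain on the dual side in $X^{m-p, m-q}_r$. The payoff of unimodularity is that at every level of the zig-zag, integration by parts against the four pieces of $d$ produces correction terms lying in the one-dimensional space $A^{m,m}_{\g^\C}$ as images of $d$, which therefore vanish as scalars, allowing the transpose relations to cascade cleanly through the multiple operator applications encoded in $d_r$.
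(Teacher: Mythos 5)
Your proposal does not prove the statement it is attached to. Proposition \ref{lie:dolbeault} is a statement about an arbitrary Lie algebra $\g$ already satisfying $H^{2m}_{dR}(\g;\C)\cong\C$: the claims to be established are the orthogonal decomposition $\mathcal{H}^{p,q}_{\bar\mu} = \mathcal{H}^{p,q}_{\bar\partial_{\bar\mu}} \oplus \bar\partial_{\bar\mu}(\mathcal{H}^{p,q-1}_{\bar\mu}) \oplus \bar\partial_{\bar\mu}^*(\mathcal{H}^{p,q+1}_{\bar\mu})$ and the resulting isomorphism $H^{p,q}_{Dol}(\g)\cong\mathcal{H}^{p,q}_{\bar\partial_{\bar\mu}}$. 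What you have written is instead a proof sketch of Theorem \ref{main:solvmanifold}: your first thread establishes unimodularity of a solvable Lie group with a lattice in order to \emph{verify} the hypothesis $H^{2m}_{dR}(\g;\C)\cong\C$ (that is the content of Lemma \ref{lemma:unimodular} and the first lines of the proof of Theorem \ref{main:solvmanifold}), and your second thread is Serre duality for the pages of the left-invariant spectral sequence, which is the assertion \eqref{invariant:serre} of that theorem and is not part of Proposition \ref{lie:dolbeault} at all. Worse, the pivotal sentence ``With (\ref{suff:cond:3}) established, Proposition \ref{lie:dolbeault} gives $H^{p,q}_{Dol}(\g)\cong\mathcal{H}^{p,q}_{\bar\partial_{\bar\mu}}$'' invokes the very proposition you are asked to prove, so as a proof of that proposition the argument is circular.

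What is actually missing is the harmonic-theoretic core. One must show that under the hypothesis (equivalently, $d\equiv 0$ on $A^{2m-1}_{\g^\C}$, hence $\partial\equiv 0$ on $A^{m-1,m}_{\g^\C}$, i.e.\ condition \eqref{suff:cond:1}) the operator $\bar\partial_{\bar\mu}^* = \mathcal{H}_{\bar\mu}\circ\bar\partial^*$ really is the metric adjoint of $\bar\partial_{\bar\mu}$ on $\bar\mu$-harmonic forms --- this is exactly where the hypothesis enters, and the paper points to \cite{CW:dolbeaultcohomology}, Lemma 5.2 for it; your proposal never touches this step. Once adjointness is in hand, $(\mathcal{H}^{p,*}_{\bar\mu},\bar\partial_{\bar\mu})$ is a finite-dimensional cochain complex with a genuine adjoint, and standard linear-algebra Hodge theory yields the three-term decomposition and the identification of its cohomology with $\ker\Delta_{\bar\partial_{\bar\mu}}$; combining this with the fact that $H^{p,q}_{Dol}(\g)$ is computed by $\bar\partial_{\bar\mu}$ acting on $\bar\mu$-harmonic representatives gives the second displayed isomorphism. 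Note also that the paper itself does not reprove this proposition --- it is quoted from \cite{CW:dolbeaultcohomology}, Theorem 5.4 --- so if your task is to supply a proof, it is these two steps (adjointness from \eqref{suff:cond:1}, then finite-dimensional Hodge decomposition) that must be written out; the unimodularity and Serre-duality material belongs to the proof of Theorem \ref{main:solvmanifold} instead.
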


The Hodge $*$ operator and conjugation give, with the usual argumentation, Serre duality for $\mathcal{H}_{\bar \partial_{\bar \mu}}^{p,q}$, and via the above isomorphism, the first stage of the left-invariant spectral sequence also satisfies Serre duality.

\begin{proposition}[\cite{CW:dolbeaultcohomology} Corollary 5.5]\label{lie:serre}
Let $\mathfrak{g}$ be a Lie Algebra and $H^{2m}(\mathfrak{g}; \mathbb{C}) \cong \mathbb{C}$. Then
\[
H^{p,q}_{Dol} (\g) \cong H^{m-p,m-q}_{Dol} (\g).
\]
\end{proposition}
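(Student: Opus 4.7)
The plan is to combine the harmonic identification from Proposition \ref{lie:dolbeault} with a conjugate-linear version of the Hodge star operator. Since $H^{2m}_{dR}(\g;\C)\cong\C$ is equivalent to condition \eqref{suff:cond:1}, Proposition \ref{lie:dolbeault} gives $H^{p,q}_{Dol}(\g)\cong\mathcal{H}^{p,q}_{\bar\partial_{\bar\mu}}$, so it suffices to produce an isomorphism of complex vector spaces
\[
\mathcal{H}^{p,q}_{\bar\partial_{\bar\mu}}\;\cong\;\mathcal{H}^{m-p,m-q}_{\bar\partial_{\bar\mu}}.
\]
The candidate map is $\bar{*}:=*\circ\overline{\,\cdot\,}$, which by the bidegree of the Hodge star carries $A^{p,q}_{\g^\C}$ to $A^{m-p,m-q}_{\g^\C}$. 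Because the spaces involved are finite-dimensional, a conjugate-linear bijection is enough to conclude equality of complex dimensions and hence the desired isomorphism.

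First I would check that $\bar{*}$ preserves $\bar\mu$-harmonicity. Using $\bar\mu^{*}=-*\bar{\bar\mu}*$ and $*^{2}=\pm\mathrm{id}$ on each bidegree, one verifies that $\bar{*}$ intertwines $\bar\mu$ with $\bar\mu^{*}$ up to sign. Consequently $\bar{*}$ maps $\ker\bar\mu\cap\ker\bar\mu^{*}=\mathcal{H}^{p,q}_{\bar\mu}$ bijectively onto $\mathcal{H}^{m-p,m-q}_{\bar\mu}$, and more generally it permutes the three summands of the Hodge decomposition
\[
A^{p,q}_{\g^\C}=\mathcal{H}^{p,q}_{\bar\mu}\oplus\bar\mu(A^{p+1,q-2}_{\g^\C})\oplus\bar\mu^{*}(A^{p-1,q+2}_{\g^\C}),
\]
sending each onto the corresponding summand in bidegree $(m-p,m-q)$. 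In particular, $\bar{*}$ commutes with the orthogonal projection $\mathcal{H}_{\bar\mu}$ onto $\bar\mu$-harmonic forms.

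Next I would show that $\bar{*}$ intertwines the operators $\bar\partial_{\bar\mu}$ and $\bar\partial_{\bar\mu}^{*}$. The identity $\bar\partial^{*}=-*\bar{\bar\partial}*$ gives $\bar{*}\,\bar\partial=\pm\bar\partial^{*}\,\bar{*}$ and $\bar{*}\,\bar\partial^{*}=\pm\bar\partial\,\bar{*}$, which, combined with the commutation $\bar{*}\circ\mathcal{H}_{\bar\mu}=\mathcal{H}_{\bar\mu}\circ\bar{*}$ from the previous step and the definitions $\bar\partial_{\bar\mu}=\mathcal{H}_{\bar\mu}\circ\bar\partial$ and $\bar\partial_{\bar\mu}^{*}=\mathcal{H}_{\bar\mu}\circ\bar\partial^{*}$, yields
\[
\bar{*}\,\bar\partial_{\bar\mu}=\pm\,\bar\partial_{\bar\mu}^{*}\,\bar{*},\qquad
\bar{*}\,\bar\partial_{\bar\mu}^{*}=\pm\,\bar\partial_{\bar\mu}\,\bar{*},
\]
on $\bar\mu$-harmonic forms. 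Therefore $\bar{*}$ sends $\mathcal{H}^{p,q}_{\bar\mu}\cap\ker\bar\partial_{\bar\mu}\cap\ker\bar\partial_{\bar\mu}^{*}$ into $\mathcal{H}^{m-p,m-q}_{\bar\mu}\cap\ker\bar\partial_{\bar\mu}^{*}\cap\ker\bar\partial_{\bar\mu}$, giving a conjugate-linear bijection $\mathcal{H}^{p,q}_{\bar\partial_{\bar\mu}}\to\mathcal{H}^{m-p,m-q}_{\bar\partial_{\bar\mu}}$ by hypothesis on $H^{2m}_{dR}(\g;\C)$. Finally the Serre duality on the spectral sequence pages (the companion statement used in Theorem \ref{main:solvmanifold}) would be proved by iterating this argument, but here only the first page is needed.

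The main technical point—and where I expect most of the work to lie—is verifying cleanly that $\bar{*}$ commutes with $\mathcal{H}_{\bar\mu}$, since $\bar\partial_{\bar\mu}^{*}$ is only defined \emph{on} the space of $\bar\mu$-harmonic forms and is a priori not a metric adjoint in the ambient space; however, exactly the hypothesis $H^{2m}_{dR}(\g;\C)\cong\C$, via the equivalence with \eqref{suff:cond:1}, makes $\bar\partial_{\bar\mu}^{*}$ the genuine metric adjoint of $\bar\partial_{\bar\mu}$, which is what legitimizes the three-piece Hodge decomposition and the commutation with $\bar{*}$.
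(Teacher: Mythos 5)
Your argument is correct and is exactly the route the paper takes: it invokes the conjugate-linear Hodge star together with the harmonic identification $H^{p,q}_{Dol}(\g)\cong\mathcal{H}^{p,q}_{\bar\partial_{\bar\mu}}$ of Proposition \ref{lie:dolbeault}, summarizing it as ``the Hodge $*$ operator and conjugation give, with the usual argumentation, Serre duality for $\mathcal{H}^{p,q}_{\bar\partial_{\bar\mu}}$'' and otherwise deferring to \cite{CW:dolbeaultcohomology}. Your write-up simply makes explicit the intertwining of $\bar\partial_{\bar\mu}$ with $\bar\partial_{\bar\mu}^{*}$ and the role of the hypothesis in making $\bar\partial_{\bar\mu}^{*}$ a genuine metric adjoint, which is precisely the ``usual argumentation'' being alluded to.
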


For the sake of completeness, we recall the proof of the following well known result.
\begin{lemma}\label{lemma:unimodular}
Let $\mathfrak{g}$ be an unimodular Lie Algebra, $\dim_{\mathbb{R}} \mathfrak{g}= 2m$. Then $d \equiv 0$ on $A^{2m-1}_{\g^\C}$.
\end{lemma}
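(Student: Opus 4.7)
The plan is to exploit the identification of the Chevalley--Eilenberg differential with the Koszul differential and use Cartan's magic formula applied to the top form of $\mathfrak{g}^*$. Since $A^{2m-1}_{\mathfrak{g}^{\mathbb{C}}}$ is obtained from $A^{2m-1}_{\mathfrak{g}}$ by complexification and $d$ is $\mathbb{C}$-linear, it suffices to check that $d \equiv 0$ on the real space $A^{2m-1}_{\mathfrak{g}}$; the complex statement then follows immediately.

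Fix a basis $e_1, \dots, e_{2m}$ of $\mathfrak{g}$ with dual basis $e^1, \dots, e^{2m}$, and let $\omega = e^1 \wedge \cdots \wedge e^{2m} \in A^{2m}_{\mathfrak{g}}$ be the top form. The key observation is that the $2m$ forms $\omega_\ell := \iota_{e_\ell} \omega = (-1)^{\ell - 1} \, e^1 \wedge \cdots \wedge \widehat{e^\ell} \wedge \cdots \wedge e^{2m}$, for $\ell = 1, \dots, 2m$, form a basis of $A^{2m-1}_{\mathfrak{g}}$, so it is enough to prove $d \omega_\ell = 0$ for every $\ell$. For this I would apply Cartan's magic formula $d \iota_X + \iota_X d = L_X$ (which holds on the Chevalley--Eilenberg complex since both sides are defined purely algebraically from the Koszul formula), with $X = e_\ell$ and the top form $\omega$. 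Because $\omega$ is of top degree, $d\omega = 0$, and we obtain
\[
d \omega_\ell = d \iota_{e_\ell} \omega = L_{e_\ell} \omega.
\]

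The main point is then to compute the algebraic Lie derivative of the top form. Recalling the Koszul expression
\[
(L_X \alpha)(Y_1, \dots, Y_k) = - \sum_{i=1}^{k} \alpha(Y_1, \dots, [X, Y_i], \dots, Y_k)
\]
(the derivation-of-functions term vanishes on the Chevalley--Eilenberg complex, where all forms are "constant"), and evaluating $L_{e_\ell} \omega$ on $(e_1, \dots, e_{2m})$, every summand is a determinant that picks out the diagonal entry of $\mathrm{ad}_{e_\ell}$ in position $i$. Summing, one obtains
\[
L_{e_\ell} \omega = - \operatorname{tr}(\mathrm{ad}_{e_\ell}) \, \omega.
\]
By the unimodularity hypothesis $\operatorname{tr}(\mathrm{ad}_X) = 0$ for every $X \in \mathfrak{g}$, so $L_{e_\ell} \omega = 0$, hence $d\omega_\ell = 0$ for all $\ell$. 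Since the $\omega_\ell$'s span $A^{2m-1}_{\mathfrak{g}}$, this gives $d \equiv 0$ on $A^{2m-1}_{\mathfrak{g}}$, and complex-linear extension concludes the proof.

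I do not expect any real obstacle here: the only point requiring a little care is to verify that Cartan's formula and the identity $L_X \omega = -\operatorname{tr}(\mathrm{ad}_X)\,\omega$ hold at the purely algebraic level of the Chevalley--Eilenberg complex, rather than on a Lie group; but both are standard and follow directly from the Koszul definitions.
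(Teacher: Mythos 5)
Your proof is correct, and it takes a cleaner, more structural route than the paper's. The paper proves the lemma by a bare-hands computation: it expands $d\bigl(\phi^1\wedge\cdots\wedge\hat\phi^j\wedge\cdots\wedge\phi^{2m}\bigr)$ via the Leibniz rule and the structure constants $C^l_{jk}$, tracks the signs, and observes that everything collapses to $(-1)^j\,\mathrm{Tr}(ad_{e_j})\,Vol$, which vanishes by unimodularity. You reach the same identity $d(\iota_{e_\ell}\omega)=\pm\,\mathrm{Tr}(ad_{e_\ell})\,\omega$, but you package the bookkeeping into the Cartan calculus of the Chevalley--Eilenberg complex: $d\iota_X+\iota_X d=L_X$ together with $d\omega=0$ in top degree reduces everything to the single computation $L_X\omega=-\mathrm{Tr}(ad_X)\,\omega$, which is immediate from the Koszul formula for the algebraic Lie derivative. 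What your approach buys is the elimination of the sign and index gymnastics (the paper's chain of displayed manipulations) and a conceptual explanation of why the trace of the adjoint is the only invariant that can appear; what the paper's approach buys is complete self-containedness, since it never needs the algebraic Cartan identity, only the definition of $d$ on the Chevalley--Eilenberg complex and the Leibniz rule. The one point you rightly flag --- that Cartan's formula and $L_X\omega=-\mathrm{Tr}(ad_X)\,\omega$ must be verified at the purely algebraic level --- is standard: both sides of Cartan's identity are degree-zero derivations of $\Lambda^\bullet\mathfrak{g}^*$ agreeing on generators, so the verification is routine, and your proof is complete as written.
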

\begin{proof}
Let $\{ e_j \}_{j=1}^{2m}$ be a basis of $\mathfrak{g}$, and $\{ \phi^j \}_{j=1}^{2m}$ a dual basis. Set
\[
[e_j, e_k] = \sum_l C_{jk}^l e_l, \qquad C_{jk}^l + C_{kj}^l=0
\]

The differential on $1$-forms is determined by the structure constants
\[
d \phi^l = - \frac{1}{2} \sum_{j,k} C_{jk}^l \phi^j \wedge \phi^k.
\]
A basis of $(2m-1)$-forms is given by $\{ \phi^1 \wedge \dots \wedge \hat \phi^j \wedge \dots \wedge \phi^{2m} \}_{j=1}^{2m}$, where $\hat \phi^j$ means that the form is omitted. Then we have
\begin{multline*}
d(\phi^1 \wedge \dots \wedge \hat \phi^j \wedge \dots \wedge \phi^{2m} ) = \sum_{k < j} (-1)^k d\phi^k \wedge \phi^1 \wedge \dots \wedge \hat \phi^k \wedge \dots \wedge \hat \phi^j \wedge \dots \wedge \phi^{2m} + \\[.5ex]
+ \sum_{k > j} (-1)^k d\phi^k \wedge \phi^1 \wedge \dots \wedge \hat \phi^j \wedge \dots \wedge \hat \phi^k \wedge \dots \wedge \phi^{2m} = \\[.5ex]
 =- \sum_{k < j} (-1)^k \Big(  \sum_{l<n} C_{ln}^k \phi^l \wedge \phi^n \Big) \wedge \phi^1 \wedge \dots \wedge \hat \phi^k \wedge \dots \wedge \hat \phi^j \wedge \dots \wedge \phi^{2m} + \\[.5ex]
 - \sum_{k > j} (-1)^k \Big(  \sum_{l<n} C_{ln}^k \phi^l \wedge \phi^n \Big) \wedge \phi^1 \wedge \dots \wedge \hat \phi^j \wedge \dots \wedge \hat \phi^k \wedge \dots \wedge \phi^{2m} = \\[.5ex]
 =- \sum_{k < j} (-1)^k C_{kj}^k \phi^k \wedge \phi^j \wedge \phi^1 \wedge \dots \wedge \hat \phi^k \wedge \dots \wedge \hat \phi^j \wedge \dots \wedge \phi^{2m} + \\[.5ex]
 - \sum_{k > j} (-1)^k C_{jk}^k \phi^j \wedge \phi^k \wedge \phi^1 \wedge \dots \wedge \hat \phi^j \wedge \dots \wedge \hat \phi^k \wedge \dots \wedge \phi^{2m} = \\[.5ex]
= (-1)^j \sum_{k} C_{jk}^k Vol = (-1)^j \, Tr(ad_{e_j} ) Vol,
\end{multline*}
where $Vol = \phi^1 \wedge \dots \wedge \phi^{2m}$, and the last equality follows by definition of trace
\[
Tr( ad_{e_j} ) = \sum_k \, \langle ad_{e_j} (e_k), e_k \rangle = \sum_k C_{jk}^k. 
\]
If $\mathfrak{g}$ is unimodular, the trace of the adjoint vanishes.
\end{proof}

We are ready to proceed with the proof.

\begin{proof}[Proof of Theorem \ref{main:solvmanifold}.]
Since $G$ is a connected, simply connected solvable Lie Group that admits a lattice, it is unimodular and by Lemma \ref{lemma:unimodular}, condition \eqref{suff:cond:2} is satisfied. 
\newline
Proposition \ref{lie:dolbeault} and \ref{lie:serre} are valid for $\g$ and so the left-invariant Dolbeault cohomology group of $M$ are isomorphic to left-invariant $\bar \partial_{\bar \mu}$-harmonic forms and satisfy Serre duality. This proves \eqref{harmonic:invariant}, and also \eqref{invariant:serre} for the first stage.
\newline
For $r=1$, note that $(^L E^{*,*}_1, d_1)$ satisfies the hypothesis of the main theorem in \cite{Mil:serre}. In fact we proved that $d$ vanishes on $A^{2m-1}_{\g^\C}$, thus also $\partial$, $\bar \partial$ and $\bar \mu$ vanish on $A^{2m-1}_{\g^\C}$. $d_1$ is a sum of such differentials, thus $d_1=0$. Serre duality at first stage, $d_1=0$ on $A^{2m-1}_{\g^\C}$ and $^L E^{m,m}_1 \cong \C$ imply Serre duality at every stage.
\end{proof}

\section{Examples}

In this section we show some example of what we proved in section \ref{contained} and in section \ref{solvmanifolds}.
\newline
We begin showing how, on a complex manifold, that \eqref{degeneracy} is not a closed condition. We also note that when it is not satisfied, cohomology classes in $H^+_\C$ do not define cohomology classes in $H^{1,1}_{\bar \partial}$.

\begin{example}[$E^{0,1}_1 = E^{0,1}_2$ is not a closed condition]\label{not:closed}
We provide two examples of not closedness of condition 
\begin{equation}
    \tag{$\ast$}
    E^{0,1}_1 = E^{0,1}_2.
\end{equation}
The first one is provided in \cite{ceballos:ugarte}, Example 4.8. We recall here the construction. Consider the $6$-dimensional real Lie Algebra spanned by $\{ e_j \}$, $j =1, \dots, 6$. The differentials of the dual basis are
\[
de^1 = 0, \quad de^2=0, \quad de^3 =0, \quad de^4 = e^{12}, \quad de^5 = e^{13} + e^{42}, \quad de^6 = e^{14} + e^{23}.
\]
Consider the family of complex structure parametrized by $t$, given by
\begin{align*}
J_t e^1 &= -\sqrt{ \frac{3 (3 - \sin t)(7 + 3 \sin t)}{(5 + \sin t)(11- \sin t)} } e^2, \\
J_t e^3 &= \sqrt{ \frac{3(3-\sin t)(11- \sin t)}{(5+\sin t)(7+3 \sin t)} } e^4, \\
J_t e^5 &= - \sqrt{ \frac{(11 - \sin t)(7 + 3 \sin t)}{3(3 - \sin t)(5 + \sin t)}} e^6,
\end{align*}
and the nilpotent complex Lie Algebra obtained as 
\begin{align*}
4 \varphi_t^1 &= \sqrt{(11 - \sin t)(5 + \sin t)} e^1 + i \sqrt{3(3 - \sin t)(7 + 3 \sin t)} e^2 \\
8 \varphi^2_t &= (5 + \sin t)(7 + 3 \sin t) e^3 - i
\sqrt{ 3(5 + \sin t)(3 - \sin t)(11 - \sin t)(7 + 3 \sin t)} e^4, \\
\end{align*}
and
\begin{multline*}
    128 \varphi^3_t = (5 + \sin t)(7 + 3 \sin t)
\Big[ 3(3 - \sin t) \sqrt{ (11 - \sin t)(5 + \sin t)} e^5 \\ + i(11 - \sin t)
\sqrt{
3(3 - \sin t)(7 + 3 \sin t)} e^6 \Big].
\end{multline*}
The  basis of $(1,0)$-forms parametrized by $t$, has the following differentials:
\[
d \varphi^1_t =0, \quad d \varphi^2_t = \varphi^{1 \bar 1}_t, \quad d \varphi^3_t = \frac{1-\sin t}{2} \varphi^{12}_t + 2 \varphi^{1 \bar 2}_t + \frac{1 + \sin t}{4} \varphi^{2 \bar 1}_t.
\]
If $\abs{ \sin t} \neq 1$, we have that the first stage of the spectral sequence coincides with the second one (but not with the third one). In particular, condition \eqref{degeneracy} is satisfied. For $\sin t =1$, we  have
\[
d \varphi^1_t = 0, \quad d \varphi^2_t = \varphi^{1 \bar 1}_t, \quad d \varphi^3_t = \varphi^{1 \bar 2}_t + \frac{1}{2}\varphi^{2 \bar 1}_t,
\]
thus $H^{0,1}_{\bar \partial} \cong \C^3$, while $E^{0,1}_2 \cong \C^2$ and \eqref{degeneracy} is not satisfied. \\
Note that $d ( \varphi^{\bar 3}_t + \varphi^{\bar 3}_t)  = \frac{1}{2} (\varphi^{1 \bar  2}_t + \varphi^{ 2 \bar  1}_t ) = \partial \varphi^{\bar 3}_t + \bar \partial \varphi^3_t$ is a $d$-exact real $2$-form and thus belongs to the $0$ class in $H^+_\C$. However, it does not belong to the $0$ class in the Dolbeault cohomology of $(1,1)$-forms since is not the $\bar \partial$ of any $(1,0)$-form, and the inclusions $H^+ \subseteq H^{1,1}_{\bar \partial}$ and so $H^+_\C \subseteq H^{1,1}_{\bar \partial} $ are not well defined.
\vspace{.3cm}

The second example comes from the holomorphically parallelizable Nakamura manifold, and its small deformations. The manifold can be obtained as the quotient of the Lie Group $G= \C \ltimes_\phi \C^2$, with
\[
\phi(z) = \begin{bmatrix}
e^z & 0 \\
0 & e^{-z} \\
\end{bmatrix},
\]
by the lattice $\Gamma = \langle a + ib, c+id \rangle$. If we assume that $b, d \in \pi \Z$ (cf. \cite{angella}, Example 3.4), we can compute the Dolbeault cohomology of the manifold using the $(0,1)$-forms
\[
d z^{\bar 1}, \quad e^{z_1} d z^{\bar 2}, \quad e^{z_1} d z^{\bar 3},
\]
and the conjugate $(1,0)$-forms.  The deformation we are interested in, seen as an element of $T^{0,1}M^* \otimes T^{1,0}M$, is 
\[
t \, d z^{\bar 1} \otimes \frac{\partial}{\partial z_1}.
\]
Direct calculations show that for all $t \neq 0$, the spectral sequence degenerates at the first stage. For $t=0$, we have $H^{0,1}_{\bar \partial} \cong \C^3$ and $E^{0,1}_2 \cong \C$, thus \eqref{degeneracy} is not satisfied. This fact is resumed in \cite{angella:kasuya:deformations}, Proposition 4.2. \\
The above manifolds, both provide a proof of Proposition \ref{not:closed:prop}.
\end{example}

\vspace{.3cm}

We proceed now to compute the Dolbeault left-invariant cohomology of $4$-dimensional solvmanifolds that do not admit integrable almost complex structures. There exists three such solvmanifolds (cf. \cite{Bock}). The first one is a nilmanifold. The second one is a symplectic, completely solvable but not nilpotent solvmanifold (Example \ref{example:sol3}) and the last one is a completely solvable but not symplectic nor nilpotent solvmanifold (Example \ref{example:G}). Calculations for the nilmanifold have already been made in \cite{CW:dolbeaultcohomology}, Example 5.15.

\begin{example}[$\Gamma \backslash Sol(3) \times \mathbb{S}^1$]\label{example:sol3}
Denote by $Sol(3)$ the solvable Lie Group of dimension $3$. It can be obtained considering the groups $(\mathbb{R}, +)$, $(\mathbb{R}^2, +)$ and taking the semi-direct product $Sol(3)= \mathbb{R} \ltimes_\phi \mathbb{R}^2$, with
\[
\phi(t) = 
\begin{bmatrix}
e^t & 0 \\
0 & e^{-t} \\
\end{bmatrix}.
\]
The product $Sol(3) \times \mathbb{R}$ can be identified as a subgroup of matrices via the homomorphism
\[
(t, x, y , s) \xmapsto{\quad \theta \quad}
\begin{bmatrix}
e^t & 0 & x & 0 & 0 \\
 & e^{-t} & y & 0 & 0 \\
 &  & 1 & 0 & 0 \\
 &  &   & 1 & s \\
 &  &   &   & 1 \\
\end{bmatrix}
 \in SL(5, \mathbb{R}).
\]
Denote with $K$ the image of $Sol(3) \times \mathbb{R}$ by $\theta$. Then $K$ is a subgroup of $SL(5, \mathbb{R})$ with respect to matrix multiplication, isomorphic to $Sol(3) \times \mathbb{R}$. By \cite{auslander} (Theorem 4), $K$ admits a lattice $\Gamma$. The quotient $M = \Gamma \backslash K$ is a solvmanifold, of real dimension $4$. An explicit construction of $M$ can be found in \cite{Bock} in the examples following the classification of four-dimensional solvmanifolds. Taking $A \in K$ and computing $A^{-1} dA$, we obtain a basis of left-invariant forms
\[
\Big \{ e^1 = dt, e^2 = e^{-t} dx, e^3 = e^t dy, e^4 = ds \Big \}.
\]
The dual basis of vector fields is
\[
\Big \{ e_1 = \frac{\partial}{\partial t},e_2 = e^t \frac{\partial}{\partial x},e_3 = e^{-t} \frac{\partial}{\partial y}, e_4 = \frac{\partial}{\partial s} \Big \}.
\]
The only non-zero brackets are 
\[
[ e_1, e_2 ] = e_2, \qquad [e_1, e_3 ] = -e_3, 
\]
and the differentials of $1$-forms can be obtained directly differentiating $\{ e^j \}$, or by taking the dual of the Lie bracket. Differentials of all forms are obtained by Leibniz rule. We have
\begin{align*}
\text{ $1$-forms: } \quad & de^1 =0, \quad de^2 = - e^{12}, \quad de^3 = e^{13}, \quad de^4 = 0; \\[1ex]
\text{ $2$-forms: } \quad & d e^{12} =0, \quad d e^{13} =0, \quad de^{14} =0, \quad de^{23}=0, \\[1ex]
& de^{24} = -e^{124}, \quad de^{34} = e^{134}; \\[1ex]
\text{$3$-forms: } \quad & de^{123}=0, \quad de^{124} =0, \quad de^{134} =0, \quad de^{234}=0. \\[1ex]
\end{align*}
The differential vanishes on $4$-forms for degree reasons. We can directly compute the left-invariant de Rham cohomology of $M$ and, since $Sol(3) \times \R$ is completely solvable, it coincides with the real de Rham Cohomology. The kernels of the differential $d$ are
\[
Z^1_d = \langle e^1, e^4 \rangle, \quad Z^2_d = \langle e^{12}, e^{13}, e^{14}, e^{23} \rangle, \quad Z^3_d = A^3 (M; \mathbb{R}); 
\]
and the images are 
\[
B^1_d = \{ 0 \}, \quad B^2_d = \langle e^{12}, e^{13} \rangle, \quad B^3_d =\langle e^{124}, e^{134} \rangle,
\]
so that the de Rham cohomology of $M$ is 
\begin{align*}
    H^0_{dR} (M; \mathbb{R}) &= \langle 1 \rangle, \\
    H^1_{dR} (M; \mathbb{R}) &= \langle e^1, e^4 \rangle, \\
    H^2_{dR} (M; \mathbb{R}) &= \langle e^{14}, e^{23} \rangle, \\
    H^3_{dR} (M; \mathbb{R}) &= \langle e^{123}, e^{234} \rangle, \\
    H^4_{dR} (M; \mathbb{R}) &= \langle e^{1234} \rangle. \\
\end{align*}
Betti numbers of $M$ are listed in Table \ref{sol3:betti}.
\begin{table}[H]
\centering
\[
\begin{array}{c|c|c|c|c|c}
       n & 0 & 1 & 2 & 3 & 4 \\
       \hline
       b^n &  1 & 2 & 2 & 2 & 1 \\
\end{array}
\]
\caption{Betti numbers for $\Gamma \backslash Sol(3)\times \mathbb{S}^1$.}
\label{sol3:betti}
\end{table}
In particular, the Euler characteristic of $M$ is $0$. The complex de Rham cohomology of $M$ is obtained as the complexified of the real one. 
\newline 
The bigrading induced on it from the spectral sequence will be clear by the full computation of the last stage of the sequence, and in general it's dependent on the almost complex structure chosen on $M$. We shall consider three almost complex structures on $M$. Set 
\[
\text{(A)}\quad  
\begin{cases}
Je_1 = e_2, \\
Je_3 = e_4, \\
\end{cases}
\qquad 
\text{(B)}\quad  
\begin{cases}
Je_1 = e_3, \\
Je_2 = e_4, \\
\end{cases}
\qquad 
\text{(C)}\quad  
\begin{cases}
Je_1 = e_4, \\
Je_2 = e_3. \\
\end{cases}
\]
The complex structure (A) induces a decomposition of the complexified tangent space. A basis is obtained taking projections of $e_1$ and $e_3$. Define
\[
Z_1 = \pi^{1,0} (e_1)= \frac{1}{2} ( e_1 - i e_2 ), \qquad Z_2 = \pi^{1,0}(e_3)= \frac{1}{2} (e_3 -ie_4).
\]
A basis of the complexified tangent space is $\{ Z_1, Z_2, \bar Z_1, \bar Z_2 \}$, and a basis of dual forms for the complexified Lie Algebra is $ \{ \varphi^1, \varphi^2, \bar \varphi^1, \bar \varphi^2 \}$, where
\[
\varphi^1 = e^1 +ie^2, \qquad \varphi^2 = e^3 +i e^4.
\]
As for the de Rham cohomology, the differentials $\mu$, $\partial$, $\bar \partial$, $\bar \mu$ of complex forms can be obtained calculating the differential of the complex forms starting from the real ones, then separating the bidegrees, or else by duality from the brackets. We shall use here the second way. The brackets of complex vector fields are
\[
[Z_1, Z_2] = \frac{1}{4} [e_1 - i e_2, e_3 - i e_4] =-\frac{1}{4}e_3 = -\frac{1}{2} Re \, Z_2 = -\frac{1}{4}( Z_2 + \bar Z_2).
\]
In the same way, we obtain
\[
[Z_1, Z_2] = [ Z_1, \bar Z_2] = [ \bar Z_1, Z_2] = [\bar Z_1, \bar Z_2] = -\frac{1}{4}( Z_2 + \bar Z_2),
\]
\[ [Z_1, \bar Z_1] = -\frac{1}{2}( Z_1 - \bar Z_1).
\]
The differential $\bar \mu \varphi^1$ is a $(0,2)$ form. It will be determined by the bracket on $(0,1)$ vector fields and duality relations
\[
\bar \mu \varphi^1 (\bar Z_1, \bar Z_2 ) = - \varphi^1 ( [\bar Z_1, \bar Z_2] ) = 0,
\]
i.e., $\bar \mu \varphi^1 =0$. In the same way, $\bar \partial \varphi^1$ is of type $(1,1)$, and we have
\begin{align*}
\bar \partial \varphi^1 ( Z_1, \bar Z_1) = -\varphi^1 ( [Z_1, \bar Z_1] ) =\frac{1}{2}, \\[1ex]
\bar \partial \varphi^1 (Z_1, \bar Z_2)=-\varphi^1 ( [Z_1, \bar Z_2] ) = 0, \\[1ex]
\bar \partial \varphi^1 ( Z_2, \bar Z_1) =-\varphi^1 ( [Z_2, \bar Z_1] ) =0, \\[1ex]
\bar \partial \varphi^1 ( Z_2, \bar Z_2) =-\varphi^1 ( [Z_2, \bar Z_2] ) =0, \\[1ex]
\end{align*}
so that $\bar \partial \varphi^1 = \frac{1}{2} \varphi^{1 \bar 1}$. We obtain:
\begin{alignat*}{5}
&\text{ $1$-forms: } \quad &&\mu \varphi^1 = 0,  \quad &&\partial \varphi^1 =0, \quad &&\bar \partial \varphi^1 = \frac{1}{2} \varphi^{1 \bar 1}, \quad &&\bar \mu \varphi^1 =0, \\[1ex]
& &&\mu \varphi^2 = 0, \quad &&\partial \varphi^2 =\frac{1}{4} \varphi^{12}, \quad &&\bar \partial \varphi^2 = \frac{1}{4} (\varphi^{1 \bar 2} + \varphi^{\bar 1 2 } ), \quad &&\bar \mu \varphi^2 =\frac{1}{4} \varphi^{\bar 1 \bar 2}, \\[1ex]
\end{alignat*}
and the conjugate equations. Note that the almost complex structure is not integrable since $\bar \mu \varphi^2 \neq 0$. From the differentials on $1$-form, we can compute them for all degrees. Every equation has to be conjugated.
\begin{alignat*}{3}
&\text{ $2$-forms: } \quad &&\begin{cases}
\mu \varphi^{12} = \partial \varphi^{12} = 0, \\[2ex]
\bar \partial \varphi^{12} = -\frac{1}{4} \varphi^{12 \bar1}, \\[2ex]
\bar \mu \varphi^{12} = -\frac{1}{4} \varphi^{1 \bar 1 \bar 2},\\[2ex]
\end{cases} \qquad \qquad 
&&\begin{cases}
\mu \varphi^{1\bar 2} = \bar \mu \varphi^{1 \bar 2} = 0, \\[2ex]
\partial \varphi^{1 \bar 2} = \frac{1}{4} \varphi^{12 \bar1}, \\[2ex]
\bar \partial \varphi^{1 \bar 2} = \frac{1}{4} \varphi^{1 \bar 1 \bar 2},\\[2ex]
\end{cases}
\\[2ex]
& &&\begin{cases}
\mu \varphi^{1 \bar 1} = \bar \mu \varphi^{1 \bar 1} = 0, \\[2ex]
\partial \varphi^{1 \bar 1} = \bar \partial \varphi^{1 \bar 1} =0 \\[2ex]
\end{cases} \qquad \qquad 
&&\begin{cases}
\mu \varphi^{2 \bar 2} = \bar \mu \varphi^{2 \bar 2} = 0, \\[2ex]
\partial \varphi^{2 \bar 2} = \frac{1}{2} \varphi^{12 \bar 2}, \\[2ex]
\bar \partial \varphi^{2 \bar 2} = -\frac{1}{2} \varphi^{2 \bar 1 \bar 2},\\[2ex]
\end{cases}
\end{alignat*}
On forms of degree $3$ and $4$, all the differentials are $0$. We are ready to compute the left-invariant spectral sequence. As a consequence of Theorem \ref{main:solvmanifold}, it satisfies Serre duality. We show calculations only for bidegree $(1,1)$. On
\[
A^{1,1}_\C = \langle \varphi^{1 \bar 1}, \varphi ^{1 \bar 2}, \varphi^{\bar 1 2}, \varphi ^{2 \bar 2} \rangle,
\]
$\bar \mu$ vanishes for bidegree reasons. $\bar \partial$ vanishes on $\varphi^{1 \bar 1}$, and $\bar \partial \varphi^{1 \bar 2} = \bar \partial \varphi^{\bar 1 2 } = \bar \mu \varphi^{12}$, while $\bar \partial \varphi^{2 \bar 2}$ does not belong to the image of $\bar \mu$, then
\[
X^{1,1}_1 = \langle \varphi^{1 \bar 1}, \varphi^{1 \bar 2}, \varphi^{\bar 1 2} \rangle.
\]
For bidegree reason, $(1,1)$-forms cannot be in the image of $\bar \mu$, so we just have to check the existence of a $(1,0)$-form $\beta^{1,0}$ such that $\eta^{1,1} = \bar \partial \beta^{1,0}$ and $\bar \mu \beta^{1,0} =0$. In general, $\beta^{1,0} = a \varphi^1 + b \varphi^2$. The condition $\bar \mu \beta^{1,0} =0$ gives
\[
\bar \mu \beta^{1,0} = \frac{b}{4} \varphi^{\bar 1 \bar 2} = 0,
\]
so it has to be $\beta^{1,0} = a \varphi^1$. Then
\[
\bar \partial \beta^{1,0} = \frac{a}{2} \varphi^{1 \bar 1},
\]
so that $Y^{1,1}_1 = \langle \varphi^{1 \bar 1} \rangle$. Taking the quotient, we have
\[
^L H^{1,1}_{Dol} (M) = \langle \varphi^{1 \bar 2 }, \varphi^{ \bar1 2} \rangle \cong \mathbb{C}^2.
\]
Analogous calculations show that 
\[
^L H^{1,0}_{Dol} (M) =\, ^L H^{2,0}_{Dol} (M) =\, ^L H^{0,2}_{Dol} (M) =\, ^L H^{1,2}_{Dol} (M) = \{ 0 \},
\]
\[
^L H^{0,1}_{Dol} (M) = \langle \varphi^{\bar 1}, \varphi^{\bar 2} \rangle \cong \mathbb{C}^2,
\]
\[
^L H^{2,1}_{Dol} (M) = \langle \varphi^{12\bar 1}, \varphi^{12\bar 2} \rangle \cong \mathbb{C}^2,
\]
\[
^L H^{0,0}_{Dol} (M) = \langle 1 \rangle \cong \mathbb{C}, 
\]
\[
^L H^{2,2}_{Dol} (M) = \langle \varphi^{12 \bar 1 \bar 2} \rangle \cong \mathbb{C}.
\]
The first stage is 
\[
^L_A E^{*,*}_1 \cong 
\begin{array}{|c|c|c|} \hline
      \rule{0pt}{1em} 0 & 0 & \mathbb{C} \\ \hline
      \rule{0pt}{1em} \mathbb{C}^2 & \mathbb{C}^2 & \mathbb{C}^2 \\ \hline
      \rule{0pt}{1em} \mathbb{C} & 0 & 0 \\ \hline
\end{array}.
\]
Note that the spectral sequence degenerates at the first stage because separately in every bidegree, the dimension can only decrease and at the last stage their sum must coincide with the Betti numbers of $M$. To compute the real $J$-invariant group $H^+$, we write forms in $H^2_{dR}$ as complex forms:
\[
e^{14} = \frac{\varphi^1 + \varphi^{\bar 1}}{2} \wedge \frac{\varphi^2 - \varphi^{\bar 2}}{2i} = \frac{1}{4i} ( \varphi^{12} + \varphi^{\bar 1 2} - \varphi^{1 \bar 2} - \varphi^{\bar 1 \bar 2} ),
\]
\[
e^{23} = \frac{\varphi^1 - \varphi^{\bar 1}}{2i} \wedge \frac{\varphi^2 + \varphi^{\bar 2}}{2} = \frac{1}{4i} ( \varphi^{12} - \varphi^{\bar 1 2} + \varphi^{1 \bar 2} - \varphi^{\bar 1 \bar 2}).
\]
First adding then subtracting we obtain $e^{14} - e^{23} \in H^+$ and $e^{14} + e^{23} \in H^-$,
\[
H^+ = \langle \frac{1}{2i} ( \varphi^{\bar 12} - \varphi^{1 \bar 2} ) \rangle, \qquad H^- = \langle \frac{1}{2i} ( \varphi^{12} - \varphi^{\bar 1 \bar 2})\rangle.
\]
\newline
Direct calculations for structure (B), lead to the same equations for the differential of $1$-forms, and thus to the same spectral sequence. For structure (C), set
\[
Z_1 = \frac{1}{2} ( e_1 - i e_4), \qquad Z_2 = \frac{1}{2}( e_2 -i e_3),
\]
and denote with $\varphi^1$, $\varphi^2$ the dual forms. The non-zero brackets are
\[
[Z_1, Z_2] = [\bar Z_1, Z_2]  =  \frac{1}{2} \bar Z_2,
\]
\[
[\bar Z_1, \bar Z_2] = [ Z_1, \bar Z_2]  =  \frac{1}{2} Z_2,
\]
and the differentials on $1$-forms are
\begin{alignat*}{4}
&\mu \varphi^1 = 0,  \quad &&\partial \varphi^1 =0, \quad &&\bar \partial \varphi^1 = 0, \quad &&\bar \mu \varphi^1 =0, \\[1ex]
&\mu \varphi^2 = 0, \quad &&\partial \varphi^2 =0, \quad &&\bar \partial \varphi^2 = -\frac{1}{2} \varphi^{1 \bar 2}, \quad &&\bar \mu \varphi^2 =-\frac{1}{2} \varphi^{\bar 1 \bar 2}. \\[1ex]
\end{alignat*}
On $2$-forms, the only non-vanishing differentials are 
\[
\bar \mu \varphi^{12} = \frac{1}{2} \varphi^{1 \bar 1 \bar 2}, \qquad  \partial \varphi^{1 \bar 2} = - \frac{1}{2} \varphi^{12 \bar 1}, 
\]
and the conjugate equations. All the differential vanish on forms of degree $3$ and $4$. The computation of Dolbeault cohomology group is straightforward:
\[
^L H^{0,0}_{Dol} (M) = \langle 1 \rangle, \qquad ^L H^{2,2}_{Dol} (M) = \langle \varphi^{12 \bar 1 \bar 2} \rangle,
\]
\[
^L H^{1,1}_{Dol} (M) = \langle \varphi^{1 \bar 1 }, \varphi^{2 \bar 2} , \varphi^{1 \bar 2}, \varphi^{\bar 1 2} \rangle \cong \mathbb{C}^4,
\]
\[
^L H^{0,1}_{Dol} (M) = \langle \varphi^{\bar 1}, \varphi^{\bar 2} \rangle, \qquad ^L H^{2,1}_{Dol} (M) = \langle \varphi^{1 2 \bar 1}, \varphi^{1 2 \bar 2} \rangle,
\]
\[
^L H^{1,0}_{Dol} (M)= \langle \varphi^1 \rangle, \qquad ^L H^{1,2}_{Dol} (M) = \langle \varphi^{2 \bar 1 \bar 2} \rangle,
\]
and the first stage of the spectral sequence is
\[
^L_C E^{*,*}_1 \cong 
\begin{array}{|c|c|c|} \hline
      \rule{0pt}{1em} 0 & \mathbb{C} & \mathbb{C} \\ \hline
      \rule{0pt}{1em} \mathbb{C}^2 & \mathbb{C}^4 & \mathbb{C}^2 \\ \hline
      \rule{0pt}{1em} \mathbb{C} & \mathbb{C} & 0 \\ \hline
\end{array}
\]
Note that this is not the bigrading induced on the complex de Rham cohomology, since for example $\dim \, ^L H^{1,0}_{Dol} + \dim \, ^L H^{0,1}_{Dol} \gneqq \dim H^1_{dR}$. The same happens for degree $2$ and $3$. The following stage is obtained as $E^{p,q}_2 \cong X^{p,q}_2/Y^{p,q}_2$. The quotients are
\[
^L E^{0,0}_2 = \langle 1 \rangle, \qquad ^L E^{2,2}_2 = \langle \varphi^{12 \bar 1 \bar 2} \rangle,
\]
\[
^L E^{1,0}_2 = \langle \varphi^1 \rangle, \qquad ^L E^{1,2}_2 = \langle \varphi^{2 \bar 1 \bar 2} \rangle,
\]
\[
^L E^{0,1}_2 = \langle \varphi^{\bar 1} \rangle, \qquad ^L E^{2,1}_2 = \langle \varphi^{12 \bar2 } \rangle, 
\]
\[
^L E^{2,0}_2 =\,  ^L E^{0,2}_2  = \{ 0 \},
\]
\[
^L E^{1,1}_2 = \langle \varphi^{1 \bar 1}, \varphi^{2 \bar 2} \rangle.
\]
For dimension reasons, this is also the $\infty$ stage of the spectral sequence,
\[
^L_C E^{*,*}_r \cong 
\begin{array}{|c|c|c|} \hline
      \rule{0pt}{1em} 0 & \mathbb{C} & \mathbb{C} \\ \hline
      \rule{0pt}{1em} \mathbb{C} & \mathbb{C}^2 & \mathbb{C} \\ \hline
      \rule{0pt}{1em} \mathbb{C} & \mathbb{C} & 0 \\ \hline
\end{array},
 \qquad \forall r \ge 2,
\]
and gives the induced bigrading on the de Rham cohomology. Proceeding as for structure (A), 
\[
H^+ = \langle i \varphi^{1 \bar 1}, i \varphi^{2 \bar 2} \rangle, \qquad H^- = \{ 0 \}.
\]
\end{example}

\begin{remark}\label{remark:inclusion:welldef}
In both examples, harmonic representatives of $H^+_\C$ are also harmonic representative of $^L H^{1,1}_{Dol}$. For structure (A), the condition $^L E^{0,1}_1 = \, ^L E^{0,1}_2$ is satisfied, and if we consider a non-harmonic representative in $H^+_\C$, it still defines a class in $^L H^{1,1}_{Dol}$, as expected from Theorem \ref{theo:inclusion:identity}. In fact we have $H^+_\C = \langle \varphi^{\bar 1 2} - \varphi^{1 \bar 2} \rangle$. The form is $d$-closed. $d$-exact $(1,1)$ forms are written as $\bar \partial \beta^{1,0} + \partial \beta^{0,1}$, with
\[
\beta^{1,0} = a \varphi^1 + b \varphi^2, \qquad \beta^{0,1} = c \varphi^{\bar 1} + d \varphi^{\bar 2},
\]
with the conditions $\bar \mu \beta^{1,0} + \bar \partial \beta^{0,1}$ and $\mu \beta^{0,1} + \partial \beta^{1,0}$ that are satisfied only if $b+d=0$. Immediately we have
\[
\bar \partial \beta^{1,0} + \partial \beta^{0,1} = \frac{a-c}{2} \varphi^{1 \bar 1},
\]
that is the $0$ class in $^L H^{1,1}_{Dol}$.
\newline
This is not true for structure (C), in fact if we modify $\varphi^{1 \bar 1}$ with a $d$-exact $(1,1)$-form, the class in $^L H^{1,1}_{Dol}$ varies.
\end{remark}

\begin{example}[Invariant deformations of $\Gamma \backslash Sol(3) \times \mathbb{S}^1$]\label{deformations:sol3}
In this example we compute deformations of the manifold $M= \Gamma \backslash Sol(3) \times \mathbb{S}^1$. Consider the almost complex solvmanifold endowed with a left-invariant almost complex structure $(M,J)$, where $J$ is the structure (C) of Example \ref{example:sol3}. We study the behaviour of the left-invariant spectral sequence under deformations.\\
Since we are interested in the left-invariant cohomology, calculating deformations of $J$ is a matter of linear algebra. As a matrix, $J$ is written as 
\[
J= \begin{bmatrix}
0 & -Id_2 \\
Id_2 & 0 \\
\end{bmatrix},
\]
and its small deformations are represented by a $4 \times 4$ matrix $L$ satisfying
\[
LJ + JL =0. 
\]
This last condition is written as
\[
L = \begin{bmatrix}
A & B \\
PBP & - PAP \\
\end{bmatrix}
\]
where $A$ and $B$ are $2 \times 2$ matrices and $P= \begin{bmatrix}
0 & 1 \\ 1 & 0 \\
\end{bmatrix}$. \\
The deformations are also codified by a form $\psi \in T^{1,0} M \otimes T^{0,1}M^*$ that is written as
\[
\psi = \psi_1^1 \varphi^{\bar 1} \otimes Z_1 + \psi_1^2 \varphi^{\bar 1} \otimes Z_2 + \psi_2^1 \varphi^{\bar 2} \otimes Z_1 + \psi_2^2 \varphi^{\bar 2} \otimes Z_2,
\]
and must satisfy $\psi = \frac{1}{2} (L - i JL)$. By writing out both members of the equality we obtain the expression of $\psi$ in function of $A$ and $B$, then we compute the brackets of the deformed structures in function of $\psi$ and the brackets at time $0$. Finally by duality we obtain the differentials of the deformed left-invariant forms, and compute the left-invariant spectral sequence. We classify deformations into two groups:
\begin{align*}
    &(i) \, \, A_{21} + A_{12} =0 \text{ and } B_{11}=0, \\
    &(ii) \, \, \text{ the remaining structures.}
\end{align*}
For structures of type $(i)$, the behaviour of the spectral sequence stays the same, and we have degeneracy at the second stage. For structures of type $(ii)$ and $t \neq 0$, we have that the spectral sequence degenerates at the first stage and coincides with the spectral sequence of structure A and B. \\
In particular this shows that condition $^L E^{0,1}_1 = \, ^L E^{0,1}_2$ is not closed (at level of left-invariant spectral sequence), since it is not satisfied for $t=0$, but it is true for deformations of class $(ii)$.
\end{example}

\begin{example}[$\Gamma \backslash G$]\label{example:G} Let $G= \mathbb{R} \ltimes_\sigma \mathbb{R}^3$, with
\[
\sigma(t) = 
\begin{bmatrix}
e^{\alpha_2 t} & 0 & 0 \\
 & e^{\alpha_3 t} & 0 \\
 & & e^{\alpha_4 t}
\end{bmatrix},
\]
with $\alpha_j$ real numbers satisfying $\alpha_2 + \alpha_3 + \alpha_4 =0$. $G$ identifies as a subgroup of matrices, still denoted by $G$, via the homomorphism
\[
(t, x, y, z) \xmapsto{\quad  \quad}
\begin{bmatrix}
e^{\alpha_2 t} & 0 & 0 & x \\
 & e^{\alpha_3 t} & 0 & y \\
 & & e^{\alpha_4 t} & z \\
 & & & 1 \\
\end{bmatrix}
 \in SL(4, \mathbb{R}).
\]
The proof that $G$ admits a lattice $\Gamma$, and an explicit construction of the quotient can be found, as for example \ref{example:sol3}, in \cite{Bock}. Then the quotient $M = \Gamma \backslash G$ is a solvmanifold, of real dimension $4$. A basis of left-invariant forms is 
\[
\Big \{ e^1 = dt, e^2 = e^{-\alpha_2 t} dx, e^3 = e^{- \alpha_3 t} dy, e^4 = e^{-\alpha_4 t} dz \Big \}.
\]
The dual basis of vector fields is
\[
\Big \{ e_1 = \frac{\partial}{\partial t},e_2 = e^{\alpha_1 t} \frac{\partial}{\partial x}, e_3 = e^{\alpha_3 t} \frac{\partial}{\partial y}, e_4 = e^{\alpha_4 t} \frac{\partial}{\partial z} \Big \}.
\]
The only non-zero brackets are 
\[
[ e_1, e_j ] = \alpha_j e_j, \qquad j=2,3,4. 
\]
The real differentials are
\begin{align*}
\text{ $1$-forms: } \quad & de^1 =0, \quad de^2 = - \alpha_2 e^{12}, \quad de^3 = - \alpha_3 e^{13}, \quad de^4 = -\alpha_4 e^{14}; \\[1ex]
\text{ $2$-forms: } \quad & d e^{12} =0, \quad d e^{13} =0, \quad de^{14} =0, \quad de^{23}= \alpha_4 \, e^{123}, \\[1ex]
& de^{24} = \alpha_3 \, e^{124}, \quad de^{34} = \alpha_2 \,  e^{134}; \\[1ex]
\end{align*}
The differential vanishes on forms of degree $3$ and $4$. $G$ is completely solvable, and its left-invariant cohomology coincides with the real de Rham Cohomology:
\begin{align*}
    H^0_{dR} (M; \mathbb{R}) &= \langle 1 \rangle, \\
    H^1_{dR} (M; \mathbb{R}) &= \langle e^1 \rangle, \\
    H^2_{dR} (M; \mathbb{R}) &= \{ 0 \} \\
    H^3_{dR} (M; \mathbb{R}) &= \langle e^{234} \rangle, \\
    H^4_{dR} (M; \mathbb{R}) &= \langle e^{1234} \rangle. \\
\end{align*}
Betti numbers of $M$ are listed in Table \ref{G:betti}.
\begin{table}[H]
\centering
\[
\begin{array}{c|c|c|c|c|c}
       n & 0 & 1 & 2 & 3 & 4 \\
       \hline
       b^n &  1 & 1 & 0 & 1 & 1 \\
\end{array}
\]
\caption{Betti numbers for $\Gamma \backslash G$.}
\label{G:betti}
\end{table}
In particular, the Euler characteristic of $M$ is $0$. The complex de Rham cohomology of $M$ is obtained as the complexified of the real one. 
\newline 
We shall consider three almost complex structures on $M$. Set 
\[
\text{(A)}\quad  
\begin{cases}
Je_1 = e_2, \\
Je_3 = e_4, \\
\end{cases}
\qquad 
\text{(B)}\quad  
\begin{cases}
Je_1 = e_3, \\
Je_2 = e_4, \\
\end{cases}
\qquad 
\text{(C)}\quad  
\begin{cases}
Je_1 = e_4, \\
Je_2 = e_3, \\
\end{cases}
\]
For complex structure (A), set
\[
Z_1 = \frac{1}{2} ( e_1 - i e_2 ), \qquad Z_2 = \frac{1}{2} (e_3 -ie_4),
\]
and
\[
\varphi^1 = e^1 +ie^2, \qquad \varphi^2 = e^3 +i e^4.
\]
The  non-vanishing complex brackets are
\[
[Z_1, Z_2] = [\bar Z_1, Z_2] = \frac{\alpha_3+ \alpha_4}{4} \, Z_2 + \frac{\alpha_3 - \alpha_4}{4} \, \bar Z_2,
\]
\[
[\bar Z_1, \bar Z_2] = [ Z_1, \bar Z_2] = \frac{\alpha_3- \alpha_4}{4}\, Z_2 + \frac{\alpha_3 + \alpha_4}{4} \,\bar Z_2,
\]
\[ [Z_1, \bar Z_1] = -\frac{\alpha_2}{2}\, ( Z_1 - \bar Z_1),
\]
and the differentials
\begin{alignat*}{5}
&\text{ $1$-forms: } \quad &&\mu \varphi^1 = 0,  \quad &&\partial \varphi^1 =0, \quad &&\bar \partial \varphi^1 = \frac{\alpha_2}{2} \varphi^{1 \bar 1}, \quad &&\bar \mu \varphi^1 =0, 
\end{alignat*}
\[
\partial \varphi^2 =-\frac{\alpha_3 + \alpha_4}{4} \, \varphi^{12}, \qquad \bar \mu \varphi^2 =- \frac{\alpha_3 - \alpha_4}{4} \, \varphi^{\bar 1 \bar 2},
\]
\[
\mu \varphi^2 = 0, \qquad  \bar \partial \varphi^2 = - \frac{\alpha_3+\alpha_4}{4} \, \varphi^{\bar 1 2} - \frac{\alpha_3 - \alpha_4}{4} \, \varphi^{1 \bar 2 } ,
\]
and the conjugate equations. At degree $2$,
\begin{alignat*}{3}
&\text{ $2$-forms: } \quad &&\begin{cases}
\mu \varphi^{12} = \partial \varphi^{12} = 0, \\[2ex]
\bar \partial \varphi^{12} = -\frac{\alpha_2}{4} \, \varphi^{12 \bar1}, \\[2ex]
\bar \mu \varphi^{12} = \frac{\alpha_3 - \alpha_4}{4}\, \varphi^{1 \bar 1 \bar 2},\\[2ex]
\end{cases} \qquad \qquad 
&&\begin{cases}
\mu \varphi^{1\bar 2} = \bar \mu \varphi^{1 \bar 2} = 0, \\[2ex]
\partial \varphi^{1 \bar 2} = \frac{\alpha_4 - \alpha_3}{4} \, \varphi^{12 \bar1}, \\[2ex]
\bar \partial \varphi^{1 \bar 2} = \frac{\alpha_2}{4} \varphi^{1 \bar 1 \bar 2},\\[2ex]
\end{cases}
\\[2ex]
& &&\begin{cases}
\mu \varphi^{1 \bar 1} = \bar \mu \varphi^{1 \bar 1} = 0, \\[2ex]
\partial \varphi^{1 \bar 1} = \bar \partial \varphi^{1 \bar 1} =0 \\[2ex]
\end{cases} \qquad \qquad 
&&\begin{cases}
\mu \varphi^{2 \bar 2} = \bar \mu \varphi^{2 \bar 2} = 0, \\[2ex]
\partial \varphi^{2 \bar 2} = \frac{\alpha_2}{2} \, \varphi^{12 \bar 2}, \\[2ex]
\bar \partial \varphi^{2 \bar 2} = -\frac{\alpha_2}{2} \, \varphi^{2 \bar 1 \bar 2},\\[2ex]
\end{cases}
\end{alignat*}
Every equation has to be taken conjugate. On forms of degree $3$ and $4$, all differentials are $0$. The left-invariant Dolbeault cohomology is
\[
^L E^{*,*}_1 \cong 
\begin{array}{|c|c|c|} \hline
      \rule{0pt}{1em} 0 & 0 & \mathbb{C} \\ \hline
      \rule{0pt}{1em} \mathbb{C}^2 & \mathbb{C}^2 & \mathbb{C}^2 \\ \hline
      \rule{0pt}{1em} \mathbb{C} & 0 & 0 \\ \hline
\end{array}.
\]
The spectral sequence degenerates at the $2$-nd stage, giving the bigrading of the de Rham cohomology:
\[
^L E^{*,*}_r \cong 
\begin{array}{|c|c|c|} \hline
      \rule{0pt}{1em} 0 & 0 & \mathbb{C} \\ \hline
      \rule{0pt}{1em} \mathbb{C} & 0 & \mathbb{C} \\ \hline
      \rule{0pt}{1em} \mathbb{C} & 0 & 0 \\ \hline
\end{array}, \qquad \forall r \ge 2.
\]
If we consider complex structures (B) and (C), it just changes the pairing of vector fields, i.e., the constant $\alpha_j$ interchange with each other. For example, to compute the cohomology of (B), is enough to replace $\alpha_2$ with $\alpha_3$. The cohomologies obtained are thus isomorphic.
\end{example}

\vspace{1cm}

\small (Lorenzo Sillari)\\
\noindent \textsc{\small Scuola Internazionale Superiore di Studi Avanzati (SISSA), \\ Via Bonomea 265, 34136, Trieste, Italy.}
\newline
\small \emph{E-mail address:} lsillari@sissa.it

\vspace{1cm}

\small (Adriano Tomassini)\\
\noindent \textsc{\small Dipartimento di Scienze, Matematiche, Fisiche e Informatiche, \\ Unità di Matematica e Informatica, \\ Università degli Studi di Parma, \\ Parco Area delle Scienze 53/A, 43124, Parma, Italy.}
\newline
\small \emph{E-mail address:} adriano.tomassini@unipr.it


\begin{thebibliography}{99}

\bibitem[AK17a]{angella} D. Angella and H. Kasuya, Bott-Chern cohomology of solvmanifolds, \emph{Ann. Global Anal. Geom.}, \textbf{52} (2017), no.4, 363--411.

\bibitem[AK17b]{angella:kasuya:deformations} D. Angella and H. Kasuya, Cohomologies of deformations of solvmanifolds and closedness of some properties, \emph{North-West. Eur. J. Math.}, \textbf{3} (2017), 75–105.

\bibitem[AGH61]{auslander} L. Auslander, L. Green and F. Hahn, Flows on some three dimensional homogeneous spaces, \emph{Bull. Amer. Math. Soc.}, \textbf{67} (1961), no. 5, 494--497.

\bibitem[Boc16]{Bock} C. Bock, On low-dimensional solvmanifolds, \emph{Asian J. Math.}, \textbf{20} (2016), no.2, 199--262.

\bibitem[COUV16]{ceballos:ugarte} M. Ceballos, A. Otal, L. Ugarte and R. Villacampa, Invariant complex structures on 6-nilmanifolds: classification, Fr\"olicher spectral sequence and special Hermitian metrics, \emph{J. Geom. Anal.}, \textbf{26} (2016), 252--286.

\bibitem[CW18a]{CW:dolbeaultcohomology} J. Cirici and S. O. Wilson, Dolbeault cohomology for almost complex manifolds, \emph{arXiv:1809.01416v2} (2018).

\bibitem[CW18b]{CW:kahler} J. Cirici and S. O. Wilson, Topological and geometric aspects of almost K\"ahler manifolds via harmonic theory, \emph{arXiv:1809.01414v3} (2018), to appear in \emph{Selecta Mathematica}.

\bibitem[CFGU97]{Cordero} L. A. Cordero, M. Fernandéz, A. Gray and L. Ugarte, A general description of the terms in the Fr\"olicher spectral sequence, \emph{Differ. Geom. Appl.} \textbf{7} (1997), no.1, 75--84.

\bibitem[DLZ10]{DLZ:4manifolds} T. Dr\v{a}ghici, T.-J. Li and W. Zhang, Symplectic forms and cohomology decomposition of almost complex four-manifolds, \emph{Int. Math. Res. Not.} \textbf{2010} (2010), 1--17.

\bibitem[DLZ11]{DLZ:antiinvariant} T. Dr\v{a}ghici, T.-J. Li and W. Zhang, On the $J$-anti-invariant cohomology of almost complex $4$-manifolds, \emph{Quarterly J. Math.} \textbf{64} (2013), 83--111.

\bibitem[Fr\"o55]{fro} A. Fr\"olicher, Relations between the cohomology groups of Dolbeault and topological invariants, \emph{Proc. Nat. Acad. Sci. U.S.A.} \textbf{41} (1955), 641--644.


\bibitem[LZ09]{li:zhang} T.-J. Li and W. Zhang, Comparing tamed and compatible symplectic cones and cohomological properties of almost complex manifolds, \emph{Comm. Anal. and Geom.} \textbf{17} (2009), 651--683.

\bibitem[LWZ18]{LWZ:multicomplex} M. Livernet, S. Whitehouse and S. Ziegenhagen, On the spectral sequence associated to a multicomplex, \emph{J. Pure Appl. Algebra} \textbf{224} (2020), no. 2, 528--535.

\bibitem[Mil20]{Mil:serre} A. Milivojević, Another proof of the persistence of Serre symmetry in the Fr\"olicher spectral sequence, \emph{Complex Manifolds} \textbf{7} (2020), 141--144.


\bibitem[Rol11]{Rollenske} S. Rollenske,  Dolbeault cohomology of nilmanifolds with left-invariant complex structure, In: \emph{Complex and Differential Geometry}, Springer Proceedings in Mathematics, \textbf{8} (2011), 369--392.
\bibitem[TT20]{TT20} N. Tardini and A. Tomassini, Differential operators on almost-Hermitian manifolds and harmonic forms, \emph{Complex Manifolds} \textbf{7} (2020), 106--128.  
\end{thebibliography}
\end{document}